\title[Zeta functions and asymptotic additive bases]
      {Zeta functions and asymptotic additive bases\\
      with some unusual sets of primes}
\author[W.\ D.\ Banks]{William D.\ Banks}
\address{Department of Mathematics, 
         University of Missouri, 
         Columbia MO, USA.}
\email{bankswd@missouri.edu}
\date{\today}
\begin{document}

\begin{abstract}
Fix $\delta\in(0,1]$, $\sigma_0\in[0,1)$ and a real-valued function $\eps(x)$
for which $\varlimsup_{x\to\infty}\eps(x)\le 0$.
For every set of primes $\cP$ whose counting function $\pi_\cP(x)$
satisfies an estimate of the form
$$\pi_\cP(x)=\delta\,\pi(x)+O\bigl(x^{\sigma_0+\eps(x)}\bigr),$$
we define a zeta function $\zeta_\cP(s)$ that is
closely related to the Riemann zeta function $\zeta(s)$.  For
$\sigma_0\le\frac12$, we show that
the Riemann hypothesis is equivalent to the non-vanishing of
$\zeta_\cP(s)$ in the region $\{\sigma>\frac12\}$.

For every set of primes $\cP$ that contains the prime $2$ and
whose counting function satisfies an estimate of the form
$$\pi_\cP(x)=\delta\,\pi(x)+O\bigl((\log\log x)^{\eps(x)}\bigr),$$ 
we show that $\cP$ is an \emph{exact}
asymptotic additive basis for $\NN$,
i.e., for some integer $h=h(\cP)>0$ the
sumset $h\cP$ contains all but finitely many natural numbers.
For example, an exact asymptotic additive basis for $\NN$ is provided
by the set
$$
\{2,547,1229,1993,2749,3581,4421,5281\ldots\},
$$
which consists of $2$ and every hundredth prime thereafter.
\end{abstract}

\maketitle

%%%%%%%%%%%%%%%%%%%%%%%%
%%%%% PAPER BEGINS %%%%%
%%%%%%%%%%%%%%%%%%%%%%%%

\section{Introduction and statement of results}

Let $\NN$ denote the set of positive integers and $\PP$ the set of prime numbers.
Denote by $\pi(x)$ the prime counting function
$$
\pi(x)\defeq\#\{p\le x:p\in\PP\},
$$
and for any given set of primes $\cP$, put
$$
\pi_\cP(x)\defeq\#\{p\le x:p\in\cP\}.
$$
Given $\delta\in(0,1]$, $\sigma_0\in[0,1)$
and a real function $\eps(x)$
such that $\varlimsup\limits_{x\to\infty}\eps(x)\le 0$, let
$\sA(\delta,\sigma_0,\eps)$ denote the class consisting of sets
$\cP\subseteq\PP$ for which one has an estimate of the form
\begin{equation}
\label{eq:greenearth_zeta}
\pi_\cP(x)=\delta\,\pi(x)+O\bigl(x^{\sigma_0+\eps(x)}\bigr),
\end{equation}
where the implied constant may depend on $\cP$. Let
$\sB(\delta,\eps)$ denote the class consisting of sets $\cP\subseteq\PP$ such that
\begin{equation}
\label{eq:greenearth_asymp}
\pi_\cP(x)=\delta\,\pi(x)+O\bigl((\log\log x)^{\eps(x)}\bigr),
\end{equation}
where again the implied constant may depend on $\cP$.
The aim of this paper is to state some general results that hold true
for all sets in $\sA(\delta,\sigma_0,\eps)$,
or for all sets in $\sB(\delta,\eps)$.
We also give examples of sets $\cP$ in these classes,
to which our general results can be applied.

\subsection{Analogues of the Riemann zeta function}
\label{sec:analogues}

The Riemann zeta function is defined
in the half-plane $\{s=\sigma+it\in\CC:\sigma>1\}$
by two equivalent expressions, namely
$$
\zeta(s)\defeq\sum_{n\in\NN} n^{-s}
=\prod_{p\in\PP}(1-p^{-s})^{-1}\qquad(\sigma>1).
$$
In the extraordinary memoir of Riemann~\cite{Riemann} it is shown
that $\zeta(s)$ extends to a meromorphic function on the complex plane,
its only singularity being a simple pole at $s=1$, and that it satisfies a
functional equation relating its values at $s$ and $1-s$.
The Riemann hypothesis (RH) asserts that every non-real zero of $\zeta(s)$
lies on the critical line $\{\sigma=\tfrac12\}$.

Although the function $\zeta(s)$ incorporates all of the primes
into its definition, in this paper we observe that certain thin subsets
of the primes also give rise to functions that are strikingly similar to
$\zeta(s)$.

\begin{theorem}
\label{thm:main_zeta}
For any set $\cP\in\sA(\delta,\sigma_0,\eps)$, the function $\zeta_\cP(s)$ defined by
$$
\zeta_\cP(s)\defeq\prod_{p\in\cP}(1-p^{-s})^{-1/\delta}
\qquad(\sigma>1)
$$
extends to a meromorphic function on the region
$\{\sigma>\sigma_0\}$, and there is a function $f_\cP(s)$ which is analytic on
$\{\sigma>\sigma_0\}$ and has the property that
\begin{equation}
\label{eq:zeta_relation}
\zeta_\cP(s)=\zeta(s)\exp(f_\cP(s))
\qquad(\sigma>\sigma_0).
\end{equation}
\end{theorem}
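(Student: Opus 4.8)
The plan is to pass to logarithms and reduce the statement to the analytic continuation of an auxiliary family of prime Dirichlet series attached to the discrepancy $A(x)\defeq\pi_\cP(x)-\delta\,\pi(x)$. By \eqref{eq:greenearth_zeta} one has $A(x)=O\bigl(x^{\sigma_0+\eps(x)}\bigr)$, and since $\varlimsup_{x\to\infty}\eps(x)\le 0$ while $A$ is bounded on every bounded interval, it follows that for each $\eps'>0$,
\[
A(x)\ll_{\eps'}x^{\sigma_0+\eps'}\qquad(x\ge 1).
\]
This estimate is the only property of $\cP$ that will be used.

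First I would expand the two Euler products on $\{\sigma>1\}$. There both products converge absolutely and each factor $1-p^{-s}$ has positive real part, so, taking principal logarithms and using $-\log(1-z)=\sum_{k\ge1}z^k/k$,
\[
\zeta(s)=\exp\Bigl(\sum_{p\in\PP}\sum_{k\ge1}\tfrac{1}{k}\,p^{-ks}\Bigr),\qquad
\zeta_\cP(s)=\exp\Bigl(\tfrac1\delta\sum_{p\in\cP}\sum_{k\ge1}\tfrac{1}{k}\,p^{-ks}\Bigr)\qquad(\sigma>1).
\]
Subtracting the two exponents (legitimate by absolute convergence) and regrouping by $k$ suggests the definition
\[
f_\cP(s)\defeq\sum_{k\ge1}\frac{g_k(s)}{k\delta},\qquad
g_k(s)\defeq\sum_{p\in\cP}p^{-ks}-\delta\sum_{p\in\PP}p^{-ks},
\]
so that $\zeta_\cP(s)=\zeta(s)\exp(f_\cP(s))$ holds on $\{\sigma>1\}$ by construction. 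It then remains only to continue $f_\cP$ analytically to $\{\sigma>\sigma_0\}$.

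Here the point is that each $g_k$ has such a continuation, obtained from the hypothesis by partial summation: the partial sums of the defining coefficients of $g_k$ are exactly $\pi_\cP(x)-\delta\,\pi(x)=A(x)$, and since $A(x)x^{-ks}\to0$ for $\sigma>1$ one gets
\[
g_k(s)=ks\int_1^\infty A(t)\,t^{-ks-1}\,dt,
\]
an integral which, by the displayed bound on $A$, converges locally uniformly — and hence represents an analytic function — on the half-plane $\{\sigma>\sigma_0/k\}$. In particular $g_1$ is analytic on the target region $\{\sigma>\sigma_0\}$, and each $g_k$ with $k\ge2$ is analytic on $\{\sigma>\sigma_0/k\}\supseteq\{\sigma>\sigma_0\}$. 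To assemble $f_\cP$, fix a compact set $L\subseteq\{\sigma>\sigma_0\}$, say $L\subseteq\{\sigma\ge\sigma_1\}$ with $\sigma_1>\sigma_0$; the finitely many terms with $k\le 1/\sigma_1$ are analytic on $L$ by the above, while the tail $\sum_{k>1/\sigma_1}g_k(s)/(k\delta)$ is dominated on $L$ by $\sum_{k>1/\sigma_1}\tfrac1{k\delta}\bigl(\sum_{p\in\cP}p^{-k\sigma_1}+\delta\sum_{p\in\PP}p^{-k\sigma_1}\bigr)<\infty$ (using $k\sigma_1>1$), hence converges uniformly on $L$ to an analytic function. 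So $f_\cP$ is analytic on $\{\sigma>\sigma_0\}$. Finally, since $\zeta$ is meromorphic on all of $\CC$ with its only pole at $s=1$, the function $\zeta(s)\exp(f_\cP(s))$ is meromorphic on $\{\sigma>\sigma_0\}$ and agrees with the Euler product $\zeta_\cP(s)$ on $\{\sigma>1\}$; this furnishes the asserted meromorphic continuation, and \eqref{eq:zeta_relation} holds throughout $\{\sigma>\sigma_0\}$ by the identity theorem.

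The step I expect to be the main obstacle is the analytic continuation of the $g_k$ with small $k$ in the regime $\sigma_0<\tfrac12$: there the prime series $\sum_p p^{-ks}$ do not converge on the whole region, so one genuinely needs the input estimate \eqref{eq:greenearth_zeta}, fed in through the identity $g_k(s)=ks\int_1^\infty A(t)t^{-ks-1}\,dt$, to push their continuation down to $\{\sigma>\sigma_0/k\}$; keeping track of the $\eps(x)$-term uniformly (via the bound $A(x)\ll_{\eps'}x^{\sigma_0+\eps'}$) is the one mildly delicate point. When $\sigma_0\ge\tfrac12$ the argument simplifies, since then $\sum_{k\ge2}g_k(s)/(k\delta)$ already converges absolutely on $\{\sigma>\sigma_0\}$ and only the $k=1$ term requires the partial-summation continuation.
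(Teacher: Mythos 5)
Your proposal is correct and follows essentially the same route as the paper's own proof: the same logarithmic decomposition $f_\cP=\sum_k k^{-1}f_{\cP,k}$ (your $g_k$ is just $\delta f_{\cP,k}$), the same partial-summation identity $g_k(s)=ks\int_1^\infty A(t)t^{-ks-1}\,dt$ to continue each term to $\{\sigma>\sigma_0/k\}$, and the same split into finitely many small-$k$ terms plus an absolutely convergent tail. The only difference is cosmetic: you make explicit the deduction $A(x)\ll_{\eps'}x^{\sigma_0+\eps'}$ from $\varlimsup\eps(x)\le0$, a point the paper leaves implicit.
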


This is proved in \S\ref{sec:proof_main_zeta} below.

The following corollary is clear in view of \eqref{eq:zeta_relation};
it shows that the truth of the Riemann hypothesis depends
only on the distributional properties of certain (potentially thin)
sets of primes.

\begin{corollary}
\label{cor:main_zeta}
If $\cP\in\sA(\delta,\sigma_0,\eps)$ and $\sigma_0\le\tfrac12$,
then the Riemann hypothesis is true if and only if $\zeta_\cP(s)\ne 0$
in the half-plane $\{\sigma>\tfrac12\}$.
\end{corollary}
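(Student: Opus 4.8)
The plan is to deduce the corollary directly from the factorization \eqref{eq:zeta_relation} supplied by Theorem~\ref{thm:main_zeta}. First I would observe that the hypothesis $\sigma_0\le\tfrac12$ gives the inclusion $\{\sigma>\tfrac12\}\subseteq\{\sigma>\sigma_0\}$, so that \eqref{eq:zeta_relation} is valid throughout the half-plane $\{\sigma>\tfrac12\}$. Since $f_\cP(s)$ is analytic on $\{\sigma>\sigma_0\}$ by Theorem~\ref{thm:main_zeta}, the factor $\exp(f_\cP(s))$ is analytic and \emph{nowhere vanishing} on $\{\sigma>\tfrac12\}$. Hence for every $s$ with $\sigma>\tfrac12$ one has $\zeta_\cP(s)=0$ if and only if $\zeta(s)=0$; equivalently, $\zeta_\cP(s)\ne 0$ on $\{\sigma>\tfrac12\}$ if and only if $\zeta(s)\ne 0$ on $\{\sigma>\tfrac12\}$.

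Next I would invoke the classical reformulation of the Riemann hypothesis: RH holds if and only if $\zeta(s)$ has no zero in $\{\sigma>\tfrac12\}$. Indeed, $\zeta(s)$ has no zeros in $\{\sigma\ge 1\}$ (a fact underlying the prime number theorem), its only pole is the simple pole at $s=1$, and its trivial real zeros all lie at the negative even integers; so the only possible zeros with $\sigma>\tfrac12$ are the non-real ones, and RH is precisely the assertion that these all lie on $\{\sigma=\tfrac12\}$. Combining this with the equivalence established in the previous paragraph yields the statement of the corollary.

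At this stage the argument is essentially immediate, so there is no real obstacle: all the substantive work lies in the proof of Theorem~\ref{thm:main_zeta}, i.e.\ in establishing the meromorphic continuation of $\zeta_\cP(s)$ to $\{\sigma>\sigma_0\}$ and the representation \eqref{eq:zeta_relation}. The only point worth an explicit remark is the behaviour at $s=1$: there $\zeta_\cP(s)$ inherits a simple pole from $\zeta(s)$ (the exponential factor being finite and nonzero at $s=1$), so $s=1$ is neither a zero of $\zeta(s)$ nor of $\zeta_\cP(s)$ and plays no role in the equivalence.
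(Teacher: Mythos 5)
Your proposal is correct and follows exactly the argument the paper has in mind: the paper simply states that the corollary ``is clear in view of \eqref{eq:zeta_relation},'' and your filled-in details (the inclusion $\{\sigma>\tfrac12\}\subseteq\{\sigma>\sigma_0\}$, the nonvanishing of $\exp(f_\cP(s))$, and the standard reformulation of RH as the absence of zeros of $\zeta(s)$ with $\sigma>\tfrac12$) are precisely the routine verifications being left to the reader.
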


Similarly, for every nontrivial primitive Dirichlet character $\chi$,
the Dirichlet $L$-function $L(s,\chi)$, which is initially defined by
$$
L(s,\chi)\defeq\sum_{n\in\NN} \chi(n)n^{-s}
=\prod_{p\in\PP}(1-\chi(p)p^{-s})^{-1}\qquad(\sigma>1),
$$
extends to an entire function on the complex plane and satisfies a
functional equation relating its values at $s$ and $1-s$. The generalized
Riemann hypothesis (GRH) asserts that every non-real zero of $L(s,\chi)$
lies on the critical line.

The following result provides (in some cases) an analogue of Theorem~\ref{thm:main_zeta}.
It is proved only for quadratic Dirichlet characters $\chi$.
For any such character, let us denote
$$
\pi^-(x;\chi)\defeq\#\big\{p\le x:p\in\PP\text{~and~}\chi(p)=-1\big\},
$$
and for a given set of primes $\cP$, put
$$
\pi_\cP^-(x;\chi)\defeq\#\big\{p\le x:p\in\cP\text{~and~}\chi(p)=-1\big\}.
$$

\begin{theorem}
\label{thm:main_Lfun}
Fix $\cP\in\sA(\delta,\sigma_0,\eps)$.
Let $\chi$ be a primitive quadratic Dirichlet character,
and suppose that
\begin{equation}
\label{eq:greenearth_quadratic}
\pi_\cP^-(x;\chi)=\rho\,\pi^-(x;\chi)+O(x^{\sigma_0+\eps(x)}),
\end{equation}
where $\rho\in(0,1]$. Suppose further that $\rho/\delta=A/B$ for two positive
integers $A,B$. Then, the function $L_\cP(s,\chi)$ defined by
$$
L_\cP(s,\chi)\defeq
\prod_{p\in\cP}(1-\chi(p)p^{-s})^{-1/\delta}
\qquad(\sigma>1)
$$
extends to a meromorphic function on the region
$\{\sigma>\sigma_0\}$, and 
there is a function $f_\cP(s,\chi)$ which is analytic on
$\{\sigma>\sigma_0\}$ and has the property that
$$
\zeta(s)^B L_\cP(s,\chi)^B=\zeta(s)^A L(s,\chi)^A\exp(f_\cP(s,\chi))
\qquad(\sigma>\sigma_0).
$$
\end{theorem}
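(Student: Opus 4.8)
The plan is to follow the template of the proof of Theorem~\ref{thm:main_zeta}: pass to logarithms in the half-plane $\{\sigma>1\}$, isolate the low-order terms that obstruct analytic continuation, and continue exactly those terms past the line $\{\sigma=\sigma_0\}$ by means of the hypotheses. For $\sigma>1$ one has
$$
\log L_\cP(s,\chi)=\frac1\delta\sum_{p\in\cP}\sum_{k\ge1}\frac{\chi(p)^k}{k}\,p^{-ks},
$$
with the analogous expansions for $\log L(s,\chi)$ and $\log\zeta(s)$, the branches being those furnished by the Euler products. Since $\chi$ is quadratic, $\chi(p)^k$ equals $\chi(p)$ when $k$ is odd and equals the indicator of $p\nmid q$ (with $q$ the conductor of $\chi$) when $k$ is even; in particular the finitely many primes with $\chi(p)=0$ contribute only an entire function and may be discarded. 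I would then split the remaining primes into $\PP^{+}=\{p:\chi(p)=1\}$ and $\PP^{-}=\{p:\chi(p)=-1\}$, set $\cP^{\pm}=\cP\cap\PP^{\pm}$, and write $\pi_\cP^{+}(x;\chi)=\#\{p\le x:p\in\cP,\ \chi(p)=1\}$.

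The next step is to deduce from the two hypotheses the separate distributions of $\cP^{+}$ and $\cP^{-}$. Estimate~\eqref{eq:greenearth_quadratic} gives $\pi_\cP^{-}(x;\chi)-\rho\,\pi^{-}(x;\chi)=O(x^{\sigma_0+\eps(x)})$ directly; and from $\pi_\cP(x)=\pi_\cP^{+}(x;\chi)+\pi_\cP^{-}(x;\chi)+O(1)$, $\pi(x)=\pi^{+}(x;\chi)+\pi^{-}(x;\chi)+O(1)$ together with \eqref{eq:greenearth_zeta} one obtains
$$
\pi_\cP^{+}(x;\chi)-\delta\,\pi^{+}(x;\chi)-(\delta-\rho)\,\pi^{-}(x;\chi)=O(x^{\sigma_0+\eps(x)}).
$$
Feeding these two estimates into Abel summation exactly as in \S\ref{sec:proof_main_zeta} shows that, for each fixed $k\ge1$, the Dirichlet series
$$
\frac1\delta\sum_{p\in\cP^{+}}p^{-ks}-\sum_{p\in\PP^{+}}p^{-ks}-\frac{\delta-\rho}{\delta}\sum_{p\in\PP^{-}}p^{-ks}
\quad\text{and}\quad
\frac1\delta\sum_{p\in\cP^{-}}p^{-ks}-\frac\rho\delta\sum_{p\in\PP^{-}}p^{-ks}
$$
extend to functions analytic on $\{\sigma>\sigma_0\}$, and their sum over $k\ge2$ is likewise analytic there (when $\sigma_0\ge\tfrac12$ the terms with $k\ge2$ already converge absolutely on $\{\sigma>\sigma_0\}$; when $\sigma_0<\tfrac12$ one uses the same difference-of-series device employed for $\zeta_\cP$). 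The only functions not yet analytic on $\{\sigma>\sigma_0\}$ are then the global prime sums $\sum_{p\in\PP^{+}}p^{-ks}$ and $\sum_{p\in\PP^{-}}p^{-ks}$, whose $k=1$ case carries the singularities of $\log\zeta(s)$ and of $\log L(s,\chi)$.

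It remains to choose integer exponents that cause these global pieces to cancel. Writing $B\log\zeta(s)+B\log L_\cP(s,\chi)-C\log\zeta(s)-D\log L(s,\chi)$ and substituting for $\sum_{p\in\cP^{\pm}}p^{-ks}$ via the two estimates above, the coefficient of $\sum_{p\in\PP^{+}}p^{-ks}$ is found to be $2B-C-D$ and that of $\sum_{p\in\PP^{-}}p^{-ks}$ to be $2B-C+D-2(B\rho/\delta)$; the hypothesis $\rho/\delta=A/B$ enters precisely here, rendering the second form the integral expression $2B-C+D-2A$ and making the weight $B(\delta-\rho)/\delta=B-A$ occurring above an integer. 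Requiring both coefficients to vanish for every relevant $k$ pins down $C$ and $D$ uniquely, and this is why the theorem is phrased for the $B$-th powers $\zeta(s)^B L_\cP(s,\chi)^B$ (clearing the denominator $\delta$) rather than for $L_\cP(s,\chi)$ itself. With this choice, $f_\cP(s,\chi):=B\log\zeta(s)+B\log L_\cP(s,\chi)-C\log\zeta(s)-D\log L(s,\chi)$ extends analytically to $\{\sigma>\sigma_0\}$ by the preceding step, and exponentiating---together with the meromorphic continuations of $\zeta$ and $L(s,\chi)$ to $\CC$---yields the meromorphic continuation of $\zeta(s)^B L_\cP(s,\chi)^B$, hence of $L_\cP(s,\chi)$, to $\{\sigma>\sigma_0\}$, along with the asserted identity.

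The step I expect to be the main obstacle is this final piece of bookkeeping: one must check that a single choice of exponents simultaneously cancels the leftover combination of $\sum_{p\in\PP^{+}}p^{-ks}$ and $\sum_{p\in\PP^{-}}p^{-ks}$ for every $k$ with $\sigma_0<1/k$---it is exactly here that $\rho/\delta=A/B$ forces the powers of $\zeta$ and $L$ appearing in the theorem---and one must re-run, with the present weights, the uniform-convergence estimate from the proof of Theorem~\ref{thm:main_zeta} ensuring that the sum over $k$ defines a genuine analytic function on $\{\sigma>\sigma_0\}$ rather than a merely formal series.
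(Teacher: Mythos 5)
Your approach mirrors the paper's: expand the logarithms of the Euler products, partition the primes according to $\chi(p)\in\{1,-1,0\}$, and use Abel summation with the density estimates \eqref{eq:greenearth_zeta} and \eqref{eq:greenearth_quadratic} to continue each piece analytically. But you leave the exponent bookkeeping unfinished, and finishing it reveals a mismatch with the stated theorem. Solving your system $2B-C-D=0$ and $2B-C+D-2A=0$ gives $C=2B-A$ and $D=A$, \emph{not} $C=D=A$ as the theorem asserts. This is not a slip on your part: the theorem as written fails whenever $A\ne B$. Near $s=1$ one has
$$
\log L_\cP(s,\chi)\sim\Bigl(1-\tfrac{\rho}{\delta}\Bigr)\log\frac{1}{s-1}=\tfrac{B-A}{B}\log\frac{1}{s-1},
$$
so $L_\cP(s,\chi)^B\sim(s-1)^{A-B}$ and $\zeta(s)^B L_\cP(s,\chi)^B\sim(s-1)^{A-2B}$, while $\zeta(s)^A L(s,\chi)^A\sim(s-1)^{-A}$ since $L(1,\chi)\ne0$. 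The ratio therefore has a genuine pole of order $2(B-A)$ at $s=1$ and cannot equal $\exp(f_\cP(s,\chi))$ with $f_\cP$ analytic there.

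The paper's own proof contains exactly the arithmetic error your computation would uncover. Since $f_1(s)+f_2(s)+f_3(s)=\frac B\delta\sum_{p\in\cP}p^{-js}-A\sum_{p\in\PP}p^{-js}$ and $f_4(s)=(B-A)\sum_{p\in\PP}p^{-js}$, the sum $f_1+f_2+f_3+f_4$ equals $\frac B\delta\sum_{p\in\cP}p^{-js}+(B-2A)\sum_{p\in\PP}p^{-js}$, which agrees with the paper's claimed value $\frac B\delta\sum_{p\in\cP}p^{-js}-B\sum_{p\in\PP}p^{-js}=Bf_{\cP,j}(s)$ only when $A=B$; the discrepancy $2(B-A)\sum_{p\in\PP}p^{-js}$ is precisely the non-analytic piece that survives. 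You should therefore state explicitly that your argument proves the corrected relation
$$
\zeta(s)^B L_\cP(s,\chi)^B=\zeta(s)^{2B-A} L(s,\chi)^A\exp(f_\cP(s,\chi))\qquad(\sigma>\sigma_0),
$$
equivalently $L_\cP(s,\chi)^B=\zeta(s)^{B-A}L(s,\chi)^A\exp(f_\cP(s,\chi))$, rather than the identity as printed. With $C=2B-A$ and $D=A$ in hand, the remaining convergence argument for the tail $k\ge2$ is exactly as in the proof of Theorem~\ref{thm:main_zeta} and is fine.
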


This is proved in \S\ref{sec:proof_main_Lfun} below.

\subsection{Remarks}
\label{sec:remarks_zeta}

If one assumes that $\eps(x)$ is such that the integral
$\int_1^\infty x^{\eps(x)-1}\,dx$ converges
(for example, $\eps(x)\defeq -2(\log\log 2x)/\log x$), then
$\zeta_\cP(s)$ and $f_\cP(s)$ in Theorem~\ref{thm:main_zeta}
extend to continuous functions in the closed half-plane
$\{\sigma\ge\sigma_0\}$, and the relation
\eqref{eq:zeta_relation} persists throughout
$\{\sigma\ge\sigma_0\}$. For such $\eps(x)$ one can easily deduce the following omega result in the
case that $\sigma_0=\tfrac12$.

\begin{corollary}
\label{cor:omega_result}
Let $\kappa:\PP\to\{\pm 1\}$ be a function that satisfies the
estimate
$$
\#\big\{\text{\rm prime~}p\le x:\kappa(p)=-1\big\}=\tfrac12\pi(x)+O(x^{1/2+\eps(x)}).
$$
Then, for any primitive quadratic Dirichlet character $\chi$ we have
$$
\#\big\{\text{\rm prime~}p\le x:\chi(p)=\kappa(p)\big\}
=\Omega(x^{1/2+\eps(x)}).
$$
\end{corollary}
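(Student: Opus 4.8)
The plan is to argue by contradiction: suppose that
$$
N(x)\defeq\#\{\text{prime }p\le x:\chi(p)=\kappa(p)\}=o\bigl(x^{1/2+\eps(x)}\bigr),
$$
and deduce a contradiction from Theorem~\ref{thm:main_Lfun}. The set to feed into that theorem is $\cP\defeq\{p\in\PP:\kappa(p)=1\}$. The hypothesis on $\kappa$ gives $\pi_\cP(x)=\pi(x)-\#\{p\le x:\kappa(p)=-1\}=\tfrac12\pi(x)+O(x^{1/2+\eps(x)})$, so $\cP\in\sA(\tfrac12,\tfrac12,\eps)$ with $\delta=\tfrac12$. Moreover, the assumption on $N(x)$ forces the estimate~\eqref{eq:greenearth_quadratic} with $\rho=1$: the primes counted by $\pi^-(x;\chi)$ but not by $\pi_\cP^-(x;\chi)=\#\{p\le x:\kappa(p)=1,\chi(p)=-1\}$ are precisely those with $\kappa(p)=-1$ and $\chi(p)=-1$, all of which satisfy $\chi(p)=\kappa(p)$ and hence number $O(N(x))=o(x^{1/2+\eps(x)})$; thus $\pi_\cP^-(x;\chi)=\pi^-(x;\chi)+O(x^{1/2+\eps(x)})$. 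Since $\rho/\delta=2=\tfrac21$, Theorem~\ref{thm:main_Lfun} applies with $A=2$ and $B=1$; cancelling one factor of $\zeta(s)$ from its conclusion, we obtain a function $f_\cP(s,\chi)$ analytic on $\{\sigma>\tfrac12\}$ for which
$$
L_\cP(s,\chi)=\zeta(s)\,L(s,\chi)^2\exp\bigl(f_\cP(s,\chi)\bigr)\qquad(\sigma>\tfrac12).
$$
Because $L(1,\chi)\ne0$ for a primitive quadratic character $\chi$, while $\exp(f_\cP(s,\chi))$ is analytic and non-zero at $s=1$, the right-hand side has a simple pole at $s=1$; in particular $L_\cP(s,\chi)$ is unbounded on the real segment $(1,2)$.

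The contradiction now comes from estimating $L_\cP(s,\chi)$ directly on $(1,2)$ through its Euler product, where $L_\cP(s,\chi)=\prod_{p\in\cP}(1-\chi(p)p^{-s})^{-2}>0$ so that $\log L_\cP(s,\chi)$ is unambiguous. Expanding and keeping the linear terms,
$$
\log L_\cP(s,\chi)=2\sum_{p\in\cP}\chi(p)p^{-s}+O(1)\qquad(1<s<2),
$$
the $O(1)$ absorbing the contribution of the prime powers $p^k$ with $k\ge2$, whose total converges absolutely for $\sigma>\tfrac12$. Since $\{p\in\cP:\chi(p)=1\}\subseteq\{p:\chi(p)=\kappa(p)\}$, its counting function is $o(x^{1/2+\eps(x)})$, so $\sum_{p\in\cP,\,\chi(p)=1}p^{-1}<\infty$ and therefore $\sum_{p\in\cP}\chi(p)p^{-s}=-\sum_{p\in\cP}p^{-s}+O(1)$ on $(1,2)$. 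Finally, partial summation applied to $\pi_\cP(x)=\tfrac12\pi(x)+O(x^{1/2+\eps(x)})$ (the resulting error integral converging because $\varlimsup\eps\le0$) gives $\sum_{p\in\cP}p^{-s}=\tfrac12\log\zeta(s)+O(1)=-\tfrac12\log(s-1)+O(1)$. Combining the last three displays, $\log L_\cP(s,\chi)=\log(s-1)+O(1)$ on $(1,2)$, so $L_\cP(s,\chi)=O(1)$ there, contradicting the previous paragraph. Hence $N(x)\ne o(x^{1/2+\eps(x)})$, which is exactly the claim $N(x)=\Omega(x^{1/2+\eps(x)})$.

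I expect the only point needing care to be the bookkeeping of the first paragraph rather than the (routine) analytic estimates: the auxiliary set $\cP$ must be arranged so that both $\cP\in\sA(\tfrac12,\tfrac12,\eps)$ and the quadratic hypothesis~\eqref{eq:greenearth_quadratic} are available, and it is exactly the contradiction hypothesis on $N(x)$ that secures the latter, with the convenient value $\rho=1$. After that, the whole argument is a comparison of a function having a simple pole at $s=1$ against an Euler product that the sparsity of $\{p:\chi(p)=\kappa(p)\}$ keeps bounded near $s=1$. (The hypothesis on $\eps$ discussed in \S\ref{sec:remarks_zeta} is not actually used here: the argument needs only analyticity on the open region $\{\sigma>\tfrac12\}$, since the contradiction already occurs at the pole $s=1$.)
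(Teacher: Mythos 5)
The paper offers no explicit proof of this corollary; it merely asserts that "one can easily deduce" it from the fact that, under the additional hypothesis $\int_1^\infty x^{\eps(x)-1}\,dx<\infty$, the relation~\eqref{eq:zeta_relation} persists on the closed half-plane $\{\sigma\ge\tfrac12\}$. Your argument is a correct and self-contained instantiation of that claim, and it likely coincides with what the author had in mind: feed $\cP=\{p:\kappa(p)=1\}\in\sA(\tfrac12,\tfrac12,\eps)$ into Theorem~\ref{thm:main_Lfun}, use the contradiction hypothesis on $N(x)$ to verify~\eqref{eq:greenearth_quadratic} with $\rho=1$, and compare the simple pole of $\zeta(s)L(s,\chi)^2\exp(f_\cP(s,\chi))$ at $s=1$ against the Euler-product estimate $\log L_\cP(s,\chi)=\log(s-1)+O(1)$ on $(1,2)$. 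All the bookkeeping checks out: the set $\{p\in\cP:\chi(p)=1\}$ is contained in $\{p:\chi(p)=\kappa(p)\}$, so its reciprocal sum converges, and partial summation against $\pi_\cP(u)=\tfrac12\pi(u)+O(u^{1/2+\eps(u)})$ gives $\sum_{p\in\cP}p^{-s}=-\tfrac12\log(s-1)+O(1)$ (the relevant error integrals converge once $\varlimsup\eps\le0$, since for $s>1$ the integrand is eventually $\ll u^{-5/4}$).

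Your closing remark is worth highlighting: because the contradiction is extracted at the interior point $s=1$, where both sides of the relation from Theorem~\ref{thm:main_Lfun} are already available on the open region $\{\sigma>\tfrac12\}$, the closed-half-plane extension — and hence the paper's extra hypothesis that $\int_1^\infty x^{\eps(x)-1}\,dx$ converges — is never actually invoked. So you recover the corollary under the weaker blanket assumption $\varlimsup_{x\to\infty}\eps(x)\le0$ alone. That is a genuine (if minor) sharpening of what the paper states, obtained by locating the contradiction at $s=1$ rather than on the boundary line $\sigma=\tfrac12$.
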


\noindent However, a stronger (and considerably
more general) result has been obtained by
Kisilevsky and Rubinstein~\cite{KisRub}.  Their work 
lies much deeper and utilizes explicit information
about zeros of $L$-functions.

\subsection{Examples}
\label{sec:examples_zeta}

Here, we illustrate the results stated in \S\ref{sec:analogues}
with some special sets of primes.

Let $p_n$ denote the $n$th smallest prime number for
each positive integer $n$.  Note that $n=\pi(p_n)$, thus $\pi(p)$ is
the index associated to any given prime $p$.
Let $\PP_{k,b}$ denote the set of primes whose index lies in a fixed arithmetic progression $b\bmod k$; that is,
$$
\PP_{k,b}\defeq\big\{p\in\PP:\pi(p)\equiv b\bmod k\big\}.
$$

Let $\cP\defeq\PP_{k,b}$.
Since $p_n\le x$ if and only if $n\le\pi(x)$, we have
\begin{equation}
\label{eq:piPest}
\pi_\cP(x)=\#\{n\le\pi(x):n\equiv b\bmod k\}
=\fl{\frac{\pi(x)-b}{k}}=\frac{1}{k}\,\pi(x)+O(1),
\end{equation}
where $\fl{\cdot}$ is the floor function;
this shows that \eqref{eq:greenearth_zeta}
holds with $\delta=\frac1k$, $\sigma_0=0$, and $\eps(x)\equiv 0$;
in other words, $\PP_{k,b}\in\sA(\tfrac1k,0,0)$.
Applying Theorem~\ref{thm:main_zeta} and Corollary~\ref{cor:main_zeta}
we immediately deduce the following.

\begin{corollary}
\label{cor:main_zetakb}
The function
$$
\zeta_{k,b}(s)\defeq\prod_{p\in\PP_{k,b}}
(1-p^{-s})^{-k}\qquad(\sigma>1).
$$
extends to a meromorphic function on the region $\{\sigma>0\}$,
and there is a function $f_{k,b}(s)$ which is analytic on $\{\sigma>0\}$
and has the property that
$$
\zeta_{k,b}(s)=\zeta(s)\exp(f_{k,b}(s))\qquad(\sigma>0).
$$
Consequently, the Riemann hypothesis is true
if and only if $\zeta_{k,b}(s)\ne 0$ in $\{\sigma>\tfrac12\}$.
\end{corollary}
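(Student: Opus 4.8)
The plan is to deduce Corollary~\ref{cor:main_zetakb} as a direct specialization of Theorem~\ref{thm:main_zeta} and Corollary~\ref{cor:main_zeta}, using the membership $\PP_{k,b}\in\sA(\tfrac1k,0,0)$ that has already been verified via the elementary count \eqref{eq:piPest}. First I would set $\cP\defeq\PP_{k,b}$, $\delta\defeq\tfrac1k$, $\sigma_0\defeq 0$, and $\eps(x)\equiv 0$, and simply observe that the hypotheses of Theorem~\ref{thm:main_zeta} are met. Since $-1/\delta=-k$, the abstract product $\prod_{p\in\cP}(1-p^{-s})^{-1/\delta}$ is precisely $\prod_{p\in\PP_{k,b}}(1-p^{-s})^{-k}=\zeta_{k,b}(s)$ for $\sigma>1$, so Theorem~\ref{thm:main_zeta} gives the meromorphic continuation of $\zeta_{k,b}(s)$ to $\{\sigma>0\}$ together with an analytic function $f_\cP(s)$ on $\{\sigma>0\}$ satisfying $\zeta_\cP(s)=\zeta(s)\exp(f_\cP(s))$ there. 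Writing $f_{k,b}(s)\defeq f_\cP(s)$ yields the displayed relation $\zeta_{k,b}(s)=\zeta(s)\exp(f_{k,b}(s))$ on $\{\sigma>0\}$.

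For the final ``Consequently'' clause I would invoke Corollary~\ref{cor:main_zeta}: since $\sigma_0=0\le\tfrac12$, the corollary applies verbatim to $\cP=\PP_{k,b}$ and states that the Riemann hypothesis holds if and only if $\zeta_\cP(s)=\zeta_{k,b}(s)\ne 0$ in $\{\sigma>\tfrac12\}$. One can also see this directly from the relation just established: on $\{\sigma>\tfrac12\}$ the factor $\exp(f_{k,b}(s))$ is analytic and nowhere zero, so $\zeta_{k,b}(s)$ and $\zeta(s)$ have exactly the same zeros in that region; since $\zeta(s)$ has no pole there, RH is the assertion that $\zeta(s)\ne0$ on $\{\sigma>\tfrac12\}$, which is equivalent to $\zeta_{k,b}(s)\ne0$ on $\{\sigma>\tfrac12\}$.

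Honestly, there is no real obstacle here: the corollary is purely a matter of matching notation and checking that the parameter triple $(\tfrac1k,0,0)$ falls under the umbrella of the general theorems. The only point deserving a word of care is the verification that $\PP_{k,b}$ genuinely lies in $\sA(\tfrac1k,0,0)$, i.e.\ that \eqref{eq:piPest} holds; but this is the elementary identity $\pi_\cP(x)=\#\{n\le\pi(x):n\equiv b\bmod k\}=\tfrac1k\pi(x)+O(1)$, which follows immediately from the fact that $p_n\le x\iff n\le\pi(x)$ and was already recorded in the discussion preceding the corollary. Thus the proof reduces to two sentences of citation, and I would present it as such.
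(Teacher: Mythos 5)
Your proposal matches the paper's approach exactly: the paper also verifies $\PP_{k,b}\in\sA(\tfrac1k,0,0)$ via the count \eqref{eq:piPest} and then cites Theorem~\ref{thm:main_zeta} together with Corollary~\ref{cor:main_zeta} to obtain the corollary immediately. Nothing is missing and the reasoning is correct.
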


This shows that much analytic information about the Riemann zeta function
(in particular, the location of the nontrivial zeros) is captured
by a set of primes of relative density $\frac1k$.

More generally, for fixed $\kappa,\lambda\in\RR$ with
$\kappa\ge 1$, let $\cB_{\kappa,\lambda}$ be the 
non-homogeneous Beatty sequence defined by
$$
\cB_{\kappa,\lambda}\defeq\big\{n\in\NN:n=\fl{\kappa m+\lambda}\text{~for some~}m\in\ZZ\big\}.
$$
Beatty sequences appear in a variety of mathematical
settings; the arithmetic properties of these sequences have been
extensively explored in the literature.
Let $\PP_{\kappa,\lambda}$ denote the set of primes whose index lies in
$\cB_{\kappa,\lambda}$; that is,
$$
\PP_{\kappa,\lambda}\defeq\big\{p\in\PP:\pi(p)\in\cB_{\kappa,\lambda}\big\}.
$$
As with \eqref{eq:piPest} above, the estimate
$$
\pi_\cP(x)=\tfrac{1}{\kappa}\pi(x)+O(1)
$$
is immediate; therefore, $\PP_{\kappa,\lambda}\in\sA(\tfrac1\kappa,0,0)$,
and one obtains a natural extension of Corollary~\ref{cor:main_zetakb} with the function
$$
\zeta_{\kappa,\lambda}(s)\defeq\prod_{p\in\PP_{\kappa,\lambda}}
(1-p^{-s})^{-\kappa}.
$$

Next, let $\bfX\defeq\{\bfX_p:p\in\PP\}$ be a set of independent random variables,
where each variable is either $+1$ or $-1$, with a 50\% probability for either value.
The law of the iterated logarithm (due to Khintchine~\cite{Khintchine})
asserts that
$$
\varlimsup_{x\to\infty}\big(\pi(x)\log\log\pi(x)\big)^{-1/2}
\sum_{p\le x}\bfX_p=\sqrt{2}\qquad\text{a.s.}
$$
and (replacing $\{\bfX_p\}$ with $\{-\bfX_p\}$) that
$$
\varliminf_{x\to\infty}\big(\pi(x)\log\log\pi(x)\big)^{-1/2}
\sum_{p\le x}\bfX_p=-\sqrt{2}\qquad\text{a.s.},
$$
where ``a.s.'' stands for ``almost surely'' in the sense of probability theory.
In particular, denoting
$$
\PP_\bfX^+\defeq\big\{p\in\PP:\bfX_p=+1\big\}\mand
\PP_\bfX^-\defeq\big\{p\in\PP:\bfX_p=-1\big\},
$$
we have the (less precise) estimate
$$
\pi_{\PP_\bfX^+}(x)-\pi_{\PP_\bfX^-}(x)
=\sum_{p\le x}\bfX_p=O(x^{1/2})\qquad\text{a.s.}
$$
Since $\pi(x)=\pi_{\PP_\bfX^+}(x)+\pi_{\PP_\bfX^-}(x)$ we deduce that
$$
\pi_{\PP_\bfX^\pm}(x)=\tfrac12\,\pi(x)+O(x^{1/2})\qquad\text{a.s.}
$$
for either choice of the sign $\pm$.  Taking $\cP\defeq\PP_\bfX^\pm$
we see that \eqref{eq:greenearth_zeta} holds a.s.\ with
$\delta=\sigma_0=\frac12$ and $\eps(x)\equiv 0$; in other words,
$\PP_\bfX^\pm\in\sA(\tfrac12,\tfrac12,0)$ almost surely.
In view of Theorem~\ref{thm:main_zeta} and Corollary~\ref{cor:main_zeta}
we deduce the following.
 
\begin{corollary}
\label{cor:main_zetaX}
In the region $\{\sigma>1\}$, let 
\begin{equation}
\label{eq:zetaX}
\zeta_\bfX^+(s)\defeq\prod_{\substack{p\in\PP\\\bfX_p=+1}}(1-p^{-s})^{-2}\mand
\zeta_\bfX^-(s)\defeq\prod_{\substack{p\in\PP\\\bfX_p=-1}}(1-p^{-s})^{-2}
\end{equation}
Then, almost surely, both functions $\zeta_\bfX^\pm(s)$ 
extend to meromorphic functions on the region $\{\sigma>\tfrac12\}$,
and there are functions $f_\bfX^\pm(s)$
which are analytic on $\{\sigma>\tfrac12\}$ and are such that
\begin{equation}
\label{eq:zetaX_relation}
\zeta_\bfX^\pm(s)=\zeta(s)\exp(f_\bfX^\pm(s))\qquad(\sigma>\tfrac12).
\end{equation}
The Riemann hypothesis is equivalent to the assertion that,
almost surely,
$\zeta_\bfX^\pm(s)\ne 0$ in $\{\sigma>\tfrac12\}$
for either choice of the sign $\pm$.
\end{corollary}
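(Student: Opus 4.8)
The plan is to verify that the hypotheses of Theorem~\ref{thm:main_zeta} and Corollary~\ref{cor:main_zeta} hold almost surely for each of the two random sets $\PP_\bfX^{\pm}$, and then simply invoke those results. The only substantive work is the probabilistic input that places $\PP_\bfX^{\pm}$ in the class $\sA(\tfrac12,\tfrac12,0)$ almost surely; everything after that is a direct quotation of the already-proved deterministic theory.

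First I would record the elementary identity $\pi(x)=\pi_{\PP_\bfX^+}(x)+\pi_{\PP_\bfX^-}(x)$, valid for every realization, together with
$$
\pi_{\PP_\bfX^+}(x)-\pi_{\PP_\bfX^-}(x)=\sum_{p\le x}\bfX_p,
$$
so that $\pi_{\PP_\bfX^\pm}(x)=\tfrac12\pi(x)\pm\tfrac12\sum_{p\le x}\bfX_p$. The next step is to control the random sum $\sum_{p\le x}\bfX_p$. Here I would appeal to Khintchine's law of the iterated logarithm (as quoted in the excerpt): almost surely,
$$
\varlimsup_{x\to\infty}\bigl(\pi(x)\log\log\pi(x)\bigr)^{-1/2}\sum_{p\le x}\bfX_p=\sqrt2,
\qquad
\varliminf_{x\to\infty}\bigl(\pi(x)\log\log\pi(x)\bigr)^{-1/2}\sum_{p\le x}\bfX_p=-\sqrt2.
$$
Consequently, on the event of full probability where both limits hold, we have $\bigl|\sum_{p\le x}\bfX_p\bigr|=O\bigl((\pi(x)\log\log\pi(x))^{1/2}\bigr)$, and since $\pi(x)\ll x/\log x$ and $\log\log\pi(x)\ll\log\log x$, this is $O(x^{1/2})$ — indeed comfortably so, with room to spare. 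Hence almost surely
$$
\pi_{\PP_\bfX^\pm}(x)=\tfrac12\,\pi(x)+O(x^{1/2}),
$$
which is exactly the estimate \eqref{eq:greenearth_zeta} with $\delta=\tfrac12$, $\sigma_0=\tfrac12$, and $\eps(x)\equiv 0$; thus $\PP_\bfX^\pm\in\sA(\tfrac12,\tfrac12,0)$ almost surely. Note the implied constant here is random (it depends on the realization), but this is permitted since \eqref{eq:greenearth_zeta} allows the constant to depend on $\cP$.

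Finally, on that same almost-sure event, Theorem~\ref{thm:main_zeta} applied to $\cP=\PP_\bfX^{+}$ (respectively $\cP=\PP_\bfX^{-}$) with these parameters yields that $\zeta_\bfX^{+}(s)$ (resp.\ $\zeta_\bfX^{-}(s)$), which is precisely the product in \eqref{eq:zetaX} since $-1/\delta=-2$, extends meromorphically to $\{\sigma>\tfrac12\}$ and satisfies $\zeta_\bfX^{\pm}(s)=\zeta(s)\exp(f_\bfX^{\pm}(s))$ there for an analytic $f_\bfX^{\pm}$, giving \eqref{eq:zetaX_relation}. Since $\sigma_0=\tfrac12\le\tfrac12$, Corollary~\ref{cor:main_zeta} applies verbatim to each set, so on the almost-sure event the Riemann hypothesis is equivalent to $\zeta_\bfX^{\pm}(s)\ne 0$ in $\{\sigma>\tfrac12\}$. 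Because a countable intersection (here just an intersection of two) of full-probability events has full probability, the stated equivalence holds almost surely for either choice of sign, completing the proof.

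The main obstacle, such as it is, is purely bookkeeping: one must be careful that the almost-sure event on which the LIL bound holds is the \emph{same} event used for both signs and for the final equivalence, and that the randomness of the implied constant does not interfere with the hypotheses of Theorem~\ref{thm:main_zeta} (it does not, as those hypotheses already permit $\cP$-dependent constants). No genuinely new analytic or probabilistic difficulty arises beyond invoking Khintchine's theorem.
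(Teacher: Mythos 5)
Your proof is correct and follows essentially the same route the paper takes: use Khintchine's law of the iterated logarithm to get $\sum_{p\le x}\bfX_p=O(x^{1/2})$ almost surely, combine with $\pi(x)=\pi_{\PP_\bfX^+}(x)+\pi_{\PP_\bfX^-}(x)$ to place $\PP_\bfX^\pm$ in $\sA(\tfrac12,\tfrac12,0)$ on a full-probability event, and then invoke Theorem~\ref{thm:main_zeta} and Corollary~\ref{cor:main_zeta}. Your extra remarks about the randomness of the implied constant and the intersection of full-probability events are sound bookkeeping that the paper leaves implicit.
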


In the region $\{\sigma>1\}$, let us now define
\begin{equation}
\label{eq:LfunX}
L(s,\bfX)\defeq\prod_{p\in\PP}(1-\bfX_p\,p^{-s})^{-1}.
\end{equation}
The next corollary reproduces a result
that was first proved by Wintner~\cite{Wintner}
and laid the foundation for random multiplicative
function theory; it asserts that the GRH almost surely holds for the
``$L$-function'' $L(s,\bfX)$  (for more modern work in this direction, see 
\cite{ChatSound,GranSound,Harper,HarNikRad,LauTenWu}).

\begin{corollary}
\label{cor:main_zetaX2}
The function $L(s,\bfX)$
almost surely extends to an analytic function without zeros
in the region $\{\sigma>\tfrac12\}$.
\end{corollary}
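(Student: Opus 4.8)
The plan is to reduce Corollary~\ref{cor:main_zetaX2} to Corollary~\ref{cor:main_zetaX} by an elementary rearrangement of Euler products, the crucial feature being that the factor $\zeta(s)$ cancels completely. Write $\PP_\bfX^{+}$ and $\PP_\bfX^{-}$ for the sets of primes $p$ with $\bfX_p=+1$ and $\bfX_p=-1$ respectively; these partition $\PP$, and, as recorded in \S\ref{sec:examples_zeta}, almost surely $\PP_\bfX^{\pm}\in\sA(\tfrac12,\tfrac12,0)$, so that Corollary~\ref{cor:main_zetaX} is available. Working in $\{\sigma>1\}$, where every product below converges absolutely, I would first split the Euler product \eqref{eq:LfunX} according to the sign of $\bfX_p$ and then substitute $(1+p^{-s})^{-1}=(1-p^{-s})(1-p^{-2s})^{-1}$ in the $\PP_\bfX^{-}$ part, obtaining, for $\sigma>1$,
$$
L(s,\bfX)=\prod_{p\in\PP_\bfX^{+}}(1-p^{-s})^{-1}\cdot\prod_{p\in\PP_\bfX^{-}}(1-p^{-s})\cdot G(s),
$$
where $G(s)\defeq\prod_{p\in\PP_\bfX^{-}}(1-p^{-2s})^{-1}$. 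Since $\sum_p p^{-2\sigma}<\infty$ for $\sigma>\tfrac12$, this partial Euler product of $\zeta(2s)$ converges absolutely and locally uniformly on $\{\sigma>\tfrac12\}$, hence is analytic there; moreover each factor $(1-p^{-2s})^{-1}$ is finite and nonzero on this region (as $1-p^{-2s}=0$ would force $s$ purely imaginary), so $G(s)\ne 0$ throughout $\{\sigma>\tfrac12\}$.

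Next I would identify the remaining two products. Put $A(s)\defeq\prod_{p\in\PP_\bfX^{+}}(1-p^{-s})^{-1}$, so that $A(s)\cdot\prod_{p\in\PP_\bfX^{-}}(1-p^{-s})^{-1}=\zeta(s)$ on $\{\sigma>1\}$, while the definition \eqref{eq:zetaX} gives $A(s)^2=\zeta_\bfX^{+}(s)$. Hence
$$
\prod_{p\in\PP_\bfX^{+}}(1-p^{-s})^{-1}\cdot\prod_{p\in\PP_\bfX^{-}}(1-p^{-s})=\frac{A(s)^2}{\zeta(s)}=\frac{\zeta_\bfX^{+}(s)}{\zeta(s)}\qquad(\sigma>1),
$$
so that $L(s,\bfX)=G(s)\,\zeta_\bfX^{+}(s)/\zeta(s)$ on $\{\sigma>1\}$. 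Now invoke Corollary~\ref{cor:main_zetaX}: almost surely $\zeta_\bfX^{+}(s)=\zeta(s)\exp\bigl(f_\bfX^{+}(s)\bigr)$ with $f_\bfX^{+}$ analytic on $\{\sigma>\tfrac12\}$. The $\zeta(s)$ cancels, leaving $L(s,\bfX)=G(s)\exp\bigl(f_\bfX^{+}(s)\bigr)$ on $\{\sigma>1\}$, almost surely. The right-hand side is analytic and zero-free on the larger region $\{\sigma>\tfrac12\}$ and coincides with $L(s,\bfX)$ on $\{\sigma>1\}$, so it provides the asserted analytic, zero-free continuation of $L(s,\bfX)$ to $\{\sigma>\tfrac12\}$.

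I do not expect a genuine obstacle here: all the analytic substance is already packaged in Theorem~\ref{thm:main_zeta} and Corollary~\ref{cor:main_zetaX}, and what remains is bookkeeping. The two points that deserve a sentence of care are the analyticity and non-vanishing of $G$ on $\{\sigma>\tfrac12\}$ (an absolutely convergent product of finite, nonzero factors) and the passage from an identity valid on $\{\sigma>1\}$ to a continuation on $\{\sigma>\tfrac12\}$, which is justified simply by displaying a right-hand side that is manifestly analytic on the larger region. As a consistency check, evaluating $A(s)/\prod_{p\in\PP_\bfX^{-}}(1-p^{-s})^{-1}$ instead as $\zeta(s)/\zeta_\bfX^{-}(s)$ leads to $L(s,\bfX)=G(s)\exp\bigl(-f_\bfX^{-}(s)\bigr)$; agreement with the previous formula is precisely the relation $\zeta_\bfX^{+}(s)\,\zeta_\bfX^{-}(s)=\zeta(s)^2$, equivalently $f_\bfX^{+}(s)+f_\bfX^{-}(s)\equiv 0$.
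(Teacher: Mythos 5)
Your proof is correct and takes a somewhat cleaner route than the paper's. The paper squares first, writing $L(s,\bfX)^2$ as a product of $\zeta_\bfX^{\pm}$-functions and then applying Corollary~\ref{cor:main_zetaX}; as written the identity $L(s,\bfX)^2=\zeta_\bfX^{+}(2s)\,\zeta_\bfX^{+}(s)^{-1}\,\zeta_\bfX^{-}(s)$ has the roles of $\PP_\bfX^{+}$ and $\PP_\bfX^{-}$ interchanged (the correct identity, $L(s,\bfX)^2=\zeta_\bfX^{-}(2s)\,\zeta_\bfX^{-}(s)^{-1}\,\zeta_\bfX^{+}(s)$, is exactly the square of your formula $L(s,\bfX)=G(s)\exp(f_\bfX^{+}(s))$). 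Moreover, passing from a zero-free analytic continuation of $L(s,\bfX)^2$ to one of $L(s,\bfX)$ itself requires extracting a holomorphic square root, which is legitimate because $\{\sigma>\tfrac12\}$ is simply connected and one can match signs with $L(s,\bfX)$ on $\{\sigma>1\}$, but the paper leaves this step tacit. Your argument avoids squaring altogether: by rewriting only the factors over $\PP_\bfX^{-}$ you obtain $L(s,\bfX)=G(s)\,\zeta_\bfX^{+}(s)/\zeta(s)$ directly, and you handle $G(s)=\prod_{p\in\PP_\bfX^{-}}(1-p^{-2s})^{-1}$ elementarily by absolute convergence on $\{\sigma>\tfrac12\}$ rather than through a second appeal to the $\zeta_\bfX$-machinery at $2s$. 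The analytic substance is the same (Corollary~\ref{cor:main_zetaX}, hence Theorem~\ref{thm:main_zeta}), but your bookkeeping produces $L(s,\bfX)$ rather than its square in one step, so it is shorter and dispenses with the unmentioned square-root extraction; your concluding consistency check, that the alternative route gives $\exp(-f_\bfX^{-})$ and that this agrees because $f_\bfX^{+}+f_\bfX^{-}\equiv 0$, is also correct and follows from $\zeta_\bfX^{+}(s)\zeta_\bfX^{-}(s)=\zeta(s)^2$.
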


Indeed, using \eqref{eq:zetaX} and \eqref{eq:LfunX} we have
\begin{align*}
L(s,\bfX)^2
&=\prod_{p\in\PP}(1-\bfX_p\,p^{-s})^{-2}\\
&=\prod_{p\in\PP_\bfX^+}(1+p^{-s})^{-2}
\prod_{p\in\PP_\bfX^-}(1-p^{-s})^{-2}\\
&=\prod_{p\in\PP_\bfX^+}(1-p^{-2s})^{-2}
\prod_{p\in\PP_\bfX^+}(1-p^{-s})^2
\prod_{p\in\PP_\bfX^-}(1-p^{-s})^{-2}\\
&=\zeta_\bfX^+(2s)\zeta_\bfX^+(s)^{-1}\zeta_\bfX^-(s),
\end{align*}
By Corollary~\ref{cor:main_zetaX} there are (almost surely)
functions $f_\bfX^\pm(s)$ which are analytic on $\{\sigma>\tfrac12\}$
and satisfy
\eqref{eq:zetaX_relation}; in particular, the relation
\begin{align*}
L(s,\bfX)^2=\zeta_\bfX^+(2s)\zeta_\bfX^+(s)^{-1}\zeta_\bfX^-(s)
=\zeta(2s)\exp\big(f_\bfX^+(2s)-f_\bfX^+(s)+f_\bfX^-(s)\big)
\end{align*}
holds in $\{\sigma>1\}$, and it provides the required analytic continuation
of $L(s,\bfX)$ to the region $\{\sigma>\frac12\}$. Moreover,
$L(s,\bfX)\ne 0$ in $\{\sigma>\frac12\}$.

\subsection{Asymptotic additive bases}
\label{sec:asymp}

\begin{theorem}
\label{thm:main_asymp}
Every set $\cP\in\sB(\delta,\eps)$ containing the prime $2$
is an exact asymptotic additive basis for $\NN$.
In other words, there is an integer $h=h(\cP)>0$ such that the $h$-fold sumset
$$
h\cP\defeq
\mathop{\underbracket{\hskip3pt\cP+\cdots+\cP\hskip1pt}}\limits_{\text{$h$ copies}}
$$
contains all but finitely many natural numbers.
\end{theorem}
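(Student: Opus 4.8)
The plan is to establish three facts about an arbitrary $\cP\in\sB(\delta,\eps)$ containing $2$, and then to combine them by standard additive combinatorics.

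\emph{Fact 1: $\cP+\cP$ has positive lower asymptotic density.} Here I would run the Schnirelmann--Cauchy--Schwarz argument on the representation function $r(n)\defeq\#\{(p,p')\in\cP\times\cP:p+p'=n\}$. One has $\sum_{n\le x}r(n)=\pi_\cP(x/2)^2\gg x^2(\log x)^{-2}$ since $\pi_\cP(y)=\delta\pi(y)+o(\pi(y))$ with $\delta>0$; and $\sum_{n\le x}r(n)^2$ counts quadruples $(p_1,p_2,p_3,p_4)\in\cP^4$ with $p_1+p_2=p_3+p_4\le x$, hence is at most the corresponding count over all primes, which a routine Brun or Selberg sieve bound (via $r_\PP(n)\ll n(\log n)^{-2}\prod_{p\mid n}(1+p^{-1})$ together with $\sum_{n\le x}\prod_{p\mid n}(1+p^{-1})^2\ll x$) shows to be $O(x^3(\log x)^{-4})$. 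Cauchy--Schwarz then gives $\#\{n\le x:r(n)>0\}\ge\bigl(\sum_{n\le x}r(n)\bigr)^2\big/\sum_{n\le x}r(n)^2\gg x$, so $\cP+\cP$ has lower density at least some $\beta=\beta(\delta)>0$.

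\emph{Fact 2: for every modulus $q$ and every $a$ with $\gcd(a,q)=1$, the set $\cP$ contains infinitely many primes $p\equiv a\bmod q$.} I would argue by contradiction. If this failed, then for all large $y$ every prime $\equiv a\bmod q$ lying above the finite set of exceptions and below $y$ would lie outside $\cP$. But by Shiu's theorem on strings of consecutive primes in an arithmetic progression, below any sufficiently large $x$ there is a block of $L\gg_q(\log x)^{c_q}$ \emph{consecutive} primes all $\equiv a\bmod q$, with $c_q>0$, and across such a block $\pi_\cP$ does not change while $\delta\pi$ grows by $\delta L$; hence $|\pi_\cP(y)-\delta\pi(y)|\ge\tfrac12\delta L\gg_q(\log x)^{c_q}$ for some $y\le x$. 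This contradicts $\pi_\cP(y)-\delta\pi(y)=O\bigl((\log\log y)^{\eps(y)}\bigr)=O_\cP(\log\log x)$ for $y\le x$, the last step being valid because $\varlimsup_{x\to\infty}\eps(x)\le0<1$. Granting Fact 2 together with $2\in\cP$, an elementary computation in $\ZZ/d\ZZ$ --- decompose $d$ into prime powers by the Chinese remainder theorem, use that the $m$-fold sumset of $(\ZZ/\ell^{e}\ZZ)^{\times}$ is all of $\ZZ/\ell^{e}\ZZ$ for odd $\ell$ and $m\ge2$, and absorb the parity condition at the prime $2$ into a suitable number of copies of $2$ --- shows that for every fixed $h\ge4$ and every $d\ge1$ the sumset $h\cP$ is a complete residue system modulo $d$, in fact meeting every class infinitely often.

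\emph{Conclusion.} Finally I would put $S\defeq4\cP-8\subseteq\ZZ_{\ge 0}$; then $0\in S$, and since $S\supseteq(\cP+\cP)-4$ it has lower density at least $\beta>0$ by Fact 1, while each iterated sumset $jS=4j\cP-8j$ with $j\ge1$ is a complete residue system modulo every $d$ by the computation above. Now iterate Kneser's theorem in its lower-density form: at each stage either the stabilizer of $2^{k+1}S$ is trivial, so $\underline d(2^{k+1}S)\ge\min\bigl(1,2\,\underline d(2^{k}S)\bigr)$, or it equals $d\ZZ$ with $d\ge2$, so $2^{k+1}S$ is eventually periodic modulo $d$ and, meeting every residue class, is cofinite. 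In the former case the density doubles, so after finitely many steps (a number bounded in terms of $\beta$) one reaches $\underline d(2^{k}S)>\tfrac12$, whence $2^{k+1}S\supseteq\{n\ge n_0\}$ by a pigeonhole argument (two subsets of $\{0,1,\dots,n\}$ of size $>n/2$ must meet). Either way $2^{k+1}S$ is cofinite, and unwinding the definition of $S$ this says $h\cP\supseteq\{n\ge n_1\}$ with $h=2^{k+3}$, which is exactly the assertion of the theorem.

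The hard part will be Fact 2: the density step and the combinatorial step are essentially mechanical, but ruling out the possibility that $\cP$ systematically omits a residue class seems to require the genuinely quantitative form of Shiu's theorem --- long strings of consecutive primes in a progression, of length a fixed positive power of $\log x$ --- in order to overwhelm the extremely small error $O\bigl((\log\log x)^{\eps(x)}\bigr)$ permitted by the hypothesis.
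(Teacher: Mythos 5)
Your overall architecture is essentially the same as the paper's (positive relative density plus verifying that $s$-fold sumsets cover all residue classes, then Kneser), but with one genuine difference and one genuine error.

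The difference: where the paper verifies the congruence condition by invoking Haselgrove's effective Vinogradov theorem (to write a large member of $a\bmod b$ as a sum of $s$ primes each equal to $2$ or $\ge a/12$, and then transferring to $\cP$ class-by-class via Proposition~\ref{prop:statue}), you instead prove directly by a Chinese-remainder computation that $h\cP\bmod d$ is all of $\ZZ/d\ZZ$ for $h\ge 4$, using only that $\cP$ meets every reduced class mod $d$ and that $2\in\cP$. That computation is correct and is an appealing simplification: it eliminates the appeal to Haselgrove entirely. (The paper also does not redo the Kneser iteration but cites it as a black box, Lemma~\ref{lem:BanGulVau}; your re-derivation is in the right spirit, though a careful write-up would need to track exactly which form of Kneser's theorem for lower asymptotic density is being invoked.) Your Fact~1 via Cauchy--Schwarz and a sieve bound on the Goldbach representation function is standard and fine, though strictly speaking it is more than the cited lemma requires.

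The error is in Fact~2, which is Proposition~\ref{prop:statue} of the paper. You assert that Shiu's theorem gives, below $x$, a string of $L\gg_q(\log x)^{c_q}$ consecutive primes in the class $a\bmod q$. That overstates Shiu substantially: the correct lower bound on the string length (Lemma~\ref{lem:shiustrings}) is roughly $(\log\log x)^{1/\phi(q)}$, not any positive power of $\log x$. With the correct bound, your comparison collapses, because you only bound the error $O\bigl((\log\log y)^{\eps(y)}\bigr)$ by $O(\log\log x)$, and $(\log\log x)^{1/\phi(q)}$ is \emph{not} larger than $\log\log x$ once $\phi(q)\ge 2$. The contradiction is recoverable, but it requires exploiting $\varlimsup_{y\to\infty}\eps(y)\le 0$ with more precision: one needs $\eps(y)<1/\phi(q)$ for all large $y$, so that the error is $o\bigl((\log\log x)^{1/\phi(q)}\bigr)$, which Shiu's bound then dominates. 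This is exactly why the class $\sB(\delta,\eps)$ is defined with a doubly-logarithmic error term, and it is the delicate step the paper handles carefully in the proof of Proposition~\ref{prop:statue}; as written, your Fact~2 proof does not establish the needed inequality.
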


This is proved in \S\ref{sec:proof_main_asymp} below.
We remark that S\'ark\"ozy~\cite{Sar} has shown that
any set of primes $\cP$ is an asymptotic additive basis
for $\NN$, and stronger quantitative versions have been
obtained; see \cite{LiPan,Mat,RamRus,Shao}.  To show
that every $\cP\in\sB(\delta,\eps)$ containing $2$
is an \emph{exact} asymptotic additive basis,
we use a result of Shiu~\cite{Shiu} on strings of consecutive primes in an
arithmetic progression; in principle, the methods of
Green and Tao~\cite{GreTao} could be used to prove
Theorem~\ref{thm:main_asymp} with $\sB(\delta,\eps)$
replaced with a rather more restricted class of
prime sets.

\subsection{Examples}
\label{sec:examples_asymp}

As in \S\ref{sec:examples_zeta}, we put
$$
\PP_{k,b}\defeq\big\{p\in\PP:\pi(p)\equiv b\bmod k\big\}.
$$
We have already seen that
$$
\pi_\cP(x)=\tfrac{1}{k}\pi(x)+O(1)
$$
holds with $\cP\defeq\PP_{k,b}$, and therefore $\PP_{k,b}\in\sB(\tfrac1k,0)$.
Since $2\in\PP_{k,b}$ if and only if $b=1$, the next corollary follows
immediately from Theorem~\ref{thm:main_asymp}.

\begin{corollary}
\label{cor:main_asymp1}
For every $k\in\NN$, the set $\PP_{k,1}$ is an
exact asymptotic additive basis for $\NN$.
For all $b,k\in\NN$, the set $\PP_{k,b}\cup\{2\}$ is an
exact asymptotic additive basis for $\NN$.
\end{corollary}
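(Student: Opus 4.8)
The plan is to obtain both statements as immediate specializations of Theorem~\ref{thm:main_asymp}; the only things to verify are that the sets in question lie in a class $\sB(\delta,\eps)$ and that they contain the prime $2$.

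First I would recall from the computation \eqref{eq:piPest} that $\pi_{\PP_{k,b}}(x)=\tfrac1k\pi(x)+O(1)$ for every $b,k\in\NN$; taking $\eps\equiv 0$ and using that $(\log\log x)^0\equiv 1$, this shows $\PP_{k,b}\in\sB(\tfrac1k,0)$. Since $\pi(p)$ is the index of $p$ and $\pi(2)=1$, the prime $2$ lies in $\PP_{k,b}$ exactly when $b\equiv 1\bmod k$; in particular $2\in\PP_{k,1}$ for every $k$. Thus Theorem~\ref{thm:main_asymp} applies verbatim to $\PP_{k,1}$ and yields the first assertion.

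For the second assertion I would note that adjoining the single prime $2$ alters the counting function by at most $1$, so $\pi_{\PP_{k,b}\cup\{2\}}(x)=\tfrac1k\pi(x)+O(1)$ as well, whence $\PP_{k,b}\cup\{2\}\in\sB(\tfrac1k,0)$; and this set contains $2$ by construction. A second application of Theorem~\ref{thm:main_asymp} finishes the proof. I do not expect any genuine obstacle here---the corollary is a direct consequence of the theorem---the only (entirely routine) points worth recording being the stability of the class $\sB(\delta,\eps)$ under the addition or removal of finitely many primes, and the remark that $\pi(2)=1$ determines precisely which arithmetic progressions $b\bmod k$ of indices already capture the prime $2$.
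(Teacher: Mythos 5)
Your proposal is correct and follows exactly the paper's route: verify $\PP_{k,b}\in\sB(\tfrac1k,0)$ via the computation $\pi_{\PP_{k,b}}(x)=\tfrac1k\pi(x)+O(1)$, note that $2\in\PP_{k,b}$ precisely when $b\equiv 1\bmod k$ (the paper states this slightly less carefully as ``$b=1$''), and apply Theorem~\ref{thm:main_asymp}. Your explicit remark that adjoining $\{2\}$ perturbs the counting function by $O(1)$ and hence preserves membership in $\sB(\tfrac1k,0)$ is exactly the (unstated) observation the paper relies on for the second assertion.
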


For example, an exact asymptotic additive basis for $\NN$ is provided
by the set
$$
\PP_{100,1}=\{2,547,1229,1993,2749,3581,4421,5281\ldots\},
$$
which consists of $2$ and every hundredth prime thereafter.

More generally, for the set $\PP_{\kappa,\lambda}$ defined in \S\ref{sec:examples_zeta},
we have the following result.

\begin{corollary}
\label{cor:main_asymp1}
For any $\kappa,\lambda\in\RR$ with $\kappa\ge 1$,
the set $\PP_{\kappa,\lambda}\cup\{2\}$ is an
exact asymptotic additive basis for $\NN$.
\end{corollary}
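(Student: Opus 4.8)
The plan is to deduce this directly from Theorem~\ref{thm:main_asymp}, so the only thing to verify is that $\PP_{\kappa,\lambda}\cup\{2\}$ belongs to some class $\sB(\delta,\eps)$ and contains the prime $2$. First I would record the elementary counting estimate for the Beatty-indexed primes: for $\kappa\ge1$ the non-homogeneous Beatty sequence $\cB_{\kappa,\lambda}$ satisfies $\#\{n\le N:n\in\cB_{\kappa,\lambda}\}=N/\kappa+O(1)$ (the integers $\fl{\kappa m+\lambda}$ are spaced $\kappa$ apart up to a bounded error), and since $n=\pi(p_n)$ and $p_n\le x\iff n\le\pi(x)$, this gives $\pi_{\PP_{\kappa,\lambda}}(x)=\tfrac1\kappa\pi(x)+O(1)$, exactly as asserted in \S\ref{sec:examples_zeta}. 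Hence $\PP_{\kappa,\lambda}\in\sB(\tfrac1\kappa,0)$, with $\tfrac1\kappa\in(0,1]$ because $\kappa\ge1$.

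Next I would adjoin the prime $2$. Writing $\cP\defeq\PP_{\kappa,\lambda}\cup\{2\}$, the counting function changes by at most $1$, so $\pi_\cP(x)=\pi_{\PP_{\kappa,\lambda}}(x)+O(1)=\tfrac1\kappa\pi(x)+O(1)$ as well; thus $\cP\in\sB(\tfrac1\kappa,0)$ (taking $\eps\equiv0$, which is admissible since $\varlimsup_{x\to\infty}\eps(x)=0\le0$), and $2\in\cP$ by construction. Theorem~\ref{thm:main_asymp} now applies to $\cP$ verbatim and produces an integer $h=h(\cP)>0$ for which $h\cP$ contains all but finitely many natural numbers; this is the desired conclusion.

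There is essentially no obstacle here beyond correctly justifying the $O(1)$ term in the Beatty counting estimate, which is standard; all of the substance is already packaged in Theorem~\ref{thm:main_asymp} (and, behind it, Shiu's theorem on strings of consecutive primes in an arithmetic progression). For completeness I would note that the boundary case $\kappa=1$ is harmless: then $\cB_{1,\lambda}=\ZZ$, so $\PP_{1,\lambda}=\PP$ and the statement is in any case subsumed by the earlier corollary asserting that $\PP_{k,1}$ is an exact asymptotic additive basis.
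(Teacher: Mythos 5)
Your proof is correct and follows exactly the route the paper intends: establish $\pi_{\PP_{\kappa,\lambda}}(x)=\tfrac1\kappa\pi(x)+O(1)$ from the elementary Beatty count, observe that adjoining the prime $2$ perturbs the count by $O(1)$ so that $\PP_{\kappa,\lambda}\cup\{2\}\in\sB(\tfrac1\kappa,0)$ contains $2$, and invoke Theorem~\ref{thm:main_asymp}. The paper leaves most of these steps implicit (it records only the counting estimate in \S\ref{sec:examples_zeta} and states the corollary without written proof), so your write-up simply makes the same argument explicit.
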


\section{Proof of Theorem~\ref{thm:main_zeta}}
\label{sec:proof_main_zeta}

Suppose first that $s\in\CC$ with $\sigma>1$.
From the Euler product representations of
$\zeta_\cP(s)$ and $\zeta(s)$ we see that the function
$$
f_\cP(s)\defeq\log\zeta_\cP(s)-\log\zeta(s)
$$
can be written in the form
$$
f_\cP(s)=\sum_{j\ge 1}j^{-1}f_{\cP,j}(s)
$$
with
\begin{equation}
\label{eq:fPjs_defn}
f_{\cP,j}(s)\defeq \frac1\delta\sum_{p\in\cP}p^{-js}-\sum_{p\in\PP}p^{-js}
\qquad(j\ge 1).
\end{equation}
To prove the theorem, it is enough to show that $f_\cP(s)$
extends to an analytic function in $\{\sigma>\sigma_1\}$ for every
 real number $\sigma_1>\sigma_0$.

Let $\sigma_1$ be given.  Noting that $\sigma_1>0$, let
$N$ be a positive integer such that $\sigma_1>\frac1N$.
It is easy to verify that
$$
\sum_{j>N}j^{-1}f_{\cP,j}(s)
$$
extends to an analytic function in $\{\sigma>\frac1N\}$, hence also
in $\{\sigma>\sigma_1\}$. Therefore, it remains to show that
for any fixed $j\in[1,N]$, $f_{\cP,j}(s)$ extends to an
analytic function in $\{\sigma>\sigma_1\}$.

Using \eqref{eq:greenearth_zeta} we have
$$
\frac1\delta\,\pi_\cP(u)=\pi(u)+E(u)\qquad(u\ge 1),
$$
where $E(u)\ll u^{\sigma_0+\eps(u)}$, and therefore
\begin{align*}
\frac1\delta\sum_{p\in\cP}p^{-js}&=\frac1\delta\int_{1}^\infty u^{-js}\,d\pi_\cP(u)
=\frac{js}\delta\int_{1}^\infty u^{-js-1}\pi_\cP(u)\,du\\
&=js\int_{1}^\infty u^{-js-1}\pi(u)\,du
+js\int_{1}^\infty u^{-js-1}E(u)\,du\\
&=\sum_{p\in\PP}p^{-js}
+js\int_{1}^\infty u^{-js-1}E(u)\,du;
\end{align*}
that is,
$$
f_{\cP,j}(s)=js\int_{1}^\infty u^{-js-1}E(u)\,du.
$$
Since $E(u)\ll u^{\sigma_0+\eps(u)}$, the latter integral converges absolutely
in $\{\sigma>j^{-1}\sigma_0\}$, hence also in $\{\sigma>\sigma_1\}$,
and the integral representation provides the required analytic extension
of $f_{\cP,j}(s)$ when $j\in[1,N]$.

\section{Proof of Theorem~\ref{thm:main_Lfun}}
\label{sec:proof_main_Lfun}

As in \S\ref{sec:proof_main_zeta} we first 
assume that $s\in\CC$ with $\sigma>1$ and define
$$
f_\cP(s,\chi)\defeq B\log(\zeta(s)L_\cP(s,\chi))
-A\log(\zeta(s)L(s,\chi))=\sum_{j\ge 1}j^{-1}f_{\cP,j}(s,\chi),
$$
where
$$
f_{\cP,j}(s,\chi)\defeq \frac B\delta\sum_{p\in\cP}\chi(p)^jp^{-js}
-A\sum_{p\in\PP}\chi(p)^jp^{-js}+
(B-A)\sum_{p\in\PP}p^{-js}\qquad(j\ge 1).
$$
As before, let $\sigma_1>\sigma_0$ be given, and let
$N$ be a fixed positive integer such that $\sigma_1>\frac1N$.
To prove the theorem, it is enough to show that for any fixed $j\in[1,N]$
the function $f_{\cP,j}(s,\chi)$ has an analytic extension to the
region $\{\sigma>\sigma_1\}$.

Put
\begin{align*}
f_1(s)&\defeq\frac B\delta\sum_{\substack{p\in\cP\\\chi(p)=1}}
p^{-js}-A\sum_{\substack{p\in\PP\\\chi(p)=1}}p^{-js},\\
f_2(s)&\defeq\frac B\delta\sum_{\substack{p\in\cP\\\chi(p)=-1}}
p^{-js}-A\sum_{\substack{p\in\PP\\\chi(p)=-1}}p^{-js},\\
f_3(s)&\defeq\frac B\delta\sum_{\substack{p\in\cP\\p\mid q}}
p^{-js}-A\sum_{\substack{p\in\PP\\p\mid q}}p^{-js},\\
f_4(s)&\defeq (B-A)\sum_{p\in\PP}p^{-js},
\end{align*}
where $q$ is the modulus of the character $\chi$.  We have
$$
f_1(s)+f_2(s)+f_3(s)+f_4(s)=
\frac B\delta\sum_{p\in\cP}p^{-js}-B\sum_{p\in\PP}p^{-js}
=B\,f_{\cP,j}(s),
$$
where $f_{\cP,j}(s)$ is given by \eqref{eq:fPjs_defn}. Recall that
in \S\ref{sec:proof_main_zeta} we have shown that $f_{\cP,j}(s)$
has an analytic extension to the region $\{\sigma>\sigma_1\}$;
the same is also true of $f_3(s)$ (which is clearly entire).
Now observe that
$$
f_{\cP,j}(s,\chi)=f_1(s)+(-1)^j f_2(s)+f_4(s),
$$
and therefore
$$
f_{\cP,j}(s,\chi)=\begin{cases}
-f_3(s)+B\,f_{\cP,j}(s)&\quad\hbox{if $j$ is even},\\
2f_2(s)-f_3(s)+B\,f_{\cP,j}(s)&\quad\hbox{if $j$ is odd}.
\end{cases}
$$
To conclude the proof it remains to show that $f_2(s)$ extends
analytically to the region $\{\sigma>\sigma_1\}$.

Since $\rho/\delta=A/B$ we have
$$
f_2(s)=\frac A\rho\sum_{\substack{p\in\cP\\\chi(p)=-1}}
p^{-js}-A\sum_{\substack{p\in\PP\\\chi(p)=-1}}p^{-js}.
$$
Using \eqref{eq:greenearth_quadratic} we can write
$$
\frac A\rho\,\pi_\cP(u,\chi)=A\,\pi(u,\chi)+E(u),
$$
where $E(u)\ll u^{\sigma_0+\eps(u)}$. Then
\begin{align*}
\frac A\rho\sum_{\substack{p\in\cP\\\chi(p)=-1}}p^{-js}
&=\frac A\rho\int_{1}^\infty u^{-js}\,d\pi_\cP^-(u,\chi)
=\frac{jsA}\rho\int_{1}^\infty u^{-js-1}\pi_\cP^-(u,\chi)\,du\\
&=jsA\int_{1}^\infty u^{-js-1}\pi^-(u,\chi)\,du
+js\int_{1}^\infty u^{-js-1}E(u)\,du\\
&=A\sum_{\substack{p\in\PP\\\chi(p)=1}}p^{-js}
+js\int_{1}^\infty u^{-js-1}E(u)\,du;
\end{align*}
in other words,
$$
f_2(s)=js\int_{1}^\infty u^{-js-1}E(u)\,du.
$$
Since $E(u)\ll u^{\sigma_0+\eps(u)}$, the integral representation
yields the desired analytic continuation
of $f_2(s)$ to $\{\sigma>\sigma_1\}$.

\section{Proof of Theorem~\ref{thm:main_asymp}}
\label{sec:proof_main_asymp}

For the proof of Theorem~\ref{thm:main_asymp}, we use
the following result of Banks, G\"ulo\u glu and
Vaughan~\cite[Theorem~1.2]{BanGulVau} (the proof of which
relies a deep theorem of Kneser;
see Halberstam and Roth~\cite[Chapter~I, Theorem~18]{HalbRoth}).

\begin{lemma}
\label{lem:BanGulVau}
Let $\cP$ be a set of prime numbers such that
$$
\varliminf_{x\to\infty}\frac{\pi_\cP(x)}{x/\log x}>0.
$$
Suppose that there is a number $s_1$ such that for all $s\ge s_1$
and $a,b\in\NN$, the congruence
$$
p_1+\cdots+p_s\equiv a\bmod b
$$
has a solution with $p_1,\ldots,p_s\in\cP$.  Then, there is an integer $h=h(\cP)>0$ such that the $h$-fold sumset $h\cP$
contains all but finitely many natural numbers.
\end{lemma}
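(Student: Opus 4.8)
The aim is to produce an integer $h$ for which $h\cP$ contains every sufficiently large positive integer. Kneser's addition theorem is the natural instrument, but it cannot be applied to $\cP$ directly: since $\pi_\cP(x)\le\pi(x)\ll x/\log x=o(x)$, the set $\cP$ has asymptotic density $0$ in $\NN$, so one must first pass to a sumset that occupies a positive proportion of the integers. The plan therefore has three steps. First, exhibit a fixed integer $k_0$ such that $K\defeq k_0\cP$ has positive lower density in $\NN$. Second, iterate Kneser's theorem on the sumsets $K,2K,3K,\dots$ until it forces $(m_0+1)K$, for a suitable large $m_0$, to be eventually periodic---that is, to differ by only finitely many elements from a union of residue classes modulo some integer $g$. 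Third, use the congruence hypothesis to show that every residue class modulo $g$ in fact eventually occurs, so that $(m_0+1)K$ is cofinite; then $h=(m_0+1)k_0$ works.

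For the first step, let $R(n)$ denote the number of ordered representations of $n$ as a sum of $k_0$ elements of $\cP$. The density hypothesis $\varliminf_{x\to\infty}\pi_\cP(x)/(x/\log x)>0$ at once gives $\sum_{n\le N}R(n)\ge\pi_\cP(N/k_0)^{k_0}\gg N^{k_0}(\log N)^{-k_0}$, so what is needed in addition is a bound showing that $R(n)>0$ for a positive proportion of $n\le N$. This is what the Hardy--Littlewood circle method delivers in its transference form: the major-arc contribution is positive and of the expected order of magnitude in a suitable residue class, while the minor arcs are controlled by a sieve majorant for the positive-relative-density set $\cP$. Equivalently, one may simply quote the theorem of S\'ark\"ozy~\cite{Sar} (and its quantitative refinements \cite{LiPan,Mat,RamRus,Shao}), to the effect that a set of primes of positive relative density has a sumset of positive density. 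Either way, $\alpha\defeq\varliminf_{N\to\infty}N^{-1}\#\{n\le N:n\in K\}>0$. I expect this first step to be the principal obstacle, because elementary sieve estimates alone yield only $\#\{n\le N:R(n)>0\}\gg N/\log\log N$, so some non-elementary input seems essentially forced.

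For the second step, note that $K\ne\emptyset$, so $(m+1)K\supseteq mK+\{\min K\}$, a translate of $mK$; hence the lower densities $\alpha_m\defeq\varliminf_{N\to\infty}N^{-1}\#\{n\le N:n\in mK\}$ form a nondecreasing sequence bounded by $1$, with $\alpha_1=\alpha$. Applying Kneser's theorem \cite[Chapter~I, Theorem~18]{HalbRoth} to $mK+K=(m+1)K$ gives, for each $m$, one of two alternatives: \emph{either} $\alpha_{m+1}\ge\alpha_m+\alpha$, \emph{or} there is a positive integer $g$ such that $(m+1)K$ differs from a union of residue classes modulo $g$ by only finitely many elements. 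The first alternative can hold for at most $1/\alpha$ values of $m$, since each such step raises $\alpha_m$ by at least $\alpha$ while the total increase is at most $1$; so the second alternative holds for all large $m$, and we may fix $m_0$ with $(m_0+1)k_0\ge s_1$ for which it holds. With $g$ the corresponding modulus, $(m_0+1)K=(m_0+1)k_0\cP$ is then eventually periodic modulo $g$.

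For the third step, fix a residue $a$ modulo $g$ and an arbitrary integer $M\ge1$. Each of the $M$ residue classes modulo $gM$ that reduce to $a$ modulo $g$ admits, by the congruence hypothesis applied with $s=(m_0+1)k_0\ (\ge s_1)$ and modulus $b=gM$, a sum of $s$ primes of $\cP$ congruent to that class modulo $gM$; these $M$ sums lie in $(m_0+1)K$, are pairwise distinct, and are all $\equiv a\pmod g$. Since $M$ was arbitrary, $(m_0+1)K$ meets the class $a$ modulo $g$ infinitely often, and being eventually periodic modulo $g$ by the second step it must then contain all but finitely many integers in that class. As $a$ was arbitrary, $(m_0+1)K=(m_0+1)k_0\cP$ is cofinite, so $h\defeq(m_0+1)k_0$ has the required property. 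Observe that the two hypotheses of the lemma enter exactly once each: the density hypothesis launches the Kneser iteration in the second step, and the congruence hypothesis---through the modulus-refinement device above---removes the residual periodicity in the third.
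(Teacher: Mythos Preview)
The paper does not supply a proof of this lemma; it is quoted from \cite[Theorem~1.2]{BanGulVau}, with only the parenthetical remark that the argument rests on Kneser's theorem \cite[Chapter~I, Theorem~18]{HalbRoth}. Your three-step outline is a faithful reconstruction of that Kneser-based proof: the positive-relative-density hypothesis, via a Ramar\'e--Ruzsa/S\'ark\"ozy type input, produces a sumset $k_0\cP$ of positive lower density; iterating Kneser's dichotomy forces $(m_0+1)k_0\cP$ to be eventually periodic modulo some $g$ for a suitable $m_0$ with $(m_0+1)k_0\ge s_1$; and the congruence hypothesis, applied at moduli $gM$ for arbitrary $M$, shows every residue class modulo $g$ is hit infinitely often and hence cofinitely. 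All three steps are sound, and you are right to flag that the first step is the only genuinely non-elementary ingredient---the Cauchy--Schwarz/sieve argument alone loses a factor of $\log\log N$, so an external result such as \cite{RamRus} or \cite{Sar} really is needed there.
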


We also use the following statement concerning
consecutive primes in a given arithmetic progression,
which is due to Shiu~\cite[Theorem~1]{Shiu}; see also
Banks~\emph{et al}~\cite[Corollary~4]{BanFreTurB}, where
a bounded gaps variant is obtained as
a consequence of the Maynard-Tao theorem (see \cite{Maynard}).

\begin{lemma}
\label{lem:shiustrings}
Let $p_n$ denote the $n$th smallest prime number for
each positive integer $n$. Fix $c,d\in\NN$ with $\gcd(c,d)=1$.  Then, there are infinitely many $r\in\NN$
such that $p_{r+1}\equiv p_{r+2}\equiv\cdots\equiv p_{r+m(r)}\equiv c\bmod d$, where $m(r)$ is an integer-valued
function satisfying the lower bound
\begin{equation}
\label{eq:m(r)bd}
m(r)\gg\left(\frac{\log\log r\,\log\log\log\log r}
{(\log\log\log r)^2}\right)^{1/\phi(d)}.
\end{equation}
Here, $\phi(\cdot)$ is the Euler function.
\end{lemma}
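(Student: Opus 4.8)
The plan follows Shiu's method \cite{Shiu}. The first step is to reduce the lemma to the following geometric statement: \emph{for every sufficiently large $x$ there is an interval $(M,M+H]$ with $M$ large and $M+H\le x$ that contains at least $m=m(x)$ primes, every one of which is $\equiv c\bmod d$}. Granting this, the primes lying inside such an interval form a string of $\ge m$ \emph{consecutive} primes all $\equiv c\bmod d$: between any two of them there is no prime at all, because an integer of $(M,M+H]$ whose residue class modulo $d$ is non-reduced is composite for trivial reasons, while an integer in a reduced class other than $c$ will have been forced composite by the construction described below. Letting $x\to\infty$ with $M\asymp x$ then produces infinitely many admissible starting indices $r$, and the passage $p_r\asymp r\log r$ converts the size $m(x)$ into the expression on the right of \eqref{eq:m(r)bd}.

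To build such an interval, fix a window length $H$ and, via the Chinese Remainder Theorem, choose a modulus $Q$ --- namely $d$ times a well-chosen family of auxiliary primes --- together with a residue $a\bmod Q$, so that whenever $M\equiv a\bmod Q$, every integer $n\in(M,M+H]$ lying in a reduced residue class other than $c$ modulo $d$ is divisible by one of the auxiliary primes (and hence composite), while the integers $n\equiv c\bmod d$ with $\gcd(n,Q)=1$ remain the only candidates for primes. A covering argument, of the kind used in the study of large gaps between consecutive primes, shows that this can be arranged with $\log Q\ll_d H$, so that $Q$ stays below a fixed power of $\log x$ as long as $H\ll\log\log x$; it is this restriction, together with the sizes of the auxiliary primes, that ultimately forces the iterated logarithms in \eqref{eq:m(r)bd}. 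A Mertens-type estimate then shows that the number $J$ of surviving candidates in the window is $\asymp_d H/\log H$.

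It remains to show that, for at least one admissible $M$ with $M+H\le x$, the window really does contain $\gg m$ primes; this is the heart of Shiu's argument and the step I expect to be the main obstacle. One sets up the Maier matrix whose rows are the windows attached to $M=Qt+a$ for $t\in(T,2T]$ with $QT\asymp x$, and whose columns are the surviving residues; summing over the rows and invoking the Siegel--Walfisz theorem for the small modulus $Q$ evaluates the total number of primes appearing in the matrix, and hence the average number per row. The decisive point is that this average is not the crude expectation obtained by treating the candidates as random integers of size $x$: the sieve density $\prod_{p\mid Q}(1-1/p)$ does not match the Buchstab-type density of the set of surviving candidates, and a careful exploitation of this discrepancy --- quantified through the oscillation of Buchstab's function, and sharpened by optimizing the choice of auxiliary primes --- forces some row to carry substantially more primes than the average. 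Since the surviving candidates occupy a single reduced residue class while the other $\phi(d)-1$ reduced classes have to be sieved away, the gain that can be extracted, and with it the admissible value of $m$, emerges with the exponent $1/\phi(d)$; optimizing $H$ subject to $\log Q\ll\log\log x$ then yields precisely the bound \eqref{eq:m(r)bd}. Two further points need care: that the equidistribution input stays within the unconditional Siegel--Walfisz range because $Q$ is only a power of $\log x$, and that the sieve losses coming from auxiliary primes which happen to divide a surviving candidate are negligible. The consecutiveness of the resulting primes, by contrast, is automatic from the construction and requires no separate argument.
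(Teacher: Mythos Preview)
The paper does not prove this lemma at all: it is simply quoted as \cite[Theorem~1]{Shiu}, with a pointer also to the bounded-gaps variant in \cite{BanFreTurB}. Your proposal, by contrast, is a sketch of Shiu's original argument --- the Maier-matrix construction in which auxiliary primes are used, via the Chinese Remainder Theorem, to kill off all reduced residue classes modulo $d$ except $c$ inside a short window, and the oscillation of Buchstab's function is then exploited to find a row with an above-average number of primes. As a summary of Shiu's proof it is broadly accurate and hits the right structural points (covering argument to keep $Q$ a power of $\log x$, Siegel--Walfisz input, the $1/\phi(d)$ exponent arising from the $\phi(d)-1$ classes to be eliminated).

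Since the paper treats this as a black-box citation, there is no ``paper proof'' to compare against beyond the reference itself; your sketch is essentially a paraphrase of what lies behind that citation rather than an alternative route. If you intend it to stand on its own, the one place that would need real work is the quantification of the Maier-matrix gain: the phrase ``a careful exploitation of this discrepancy \ldots\ forces some row to carry substantially more primes than the average'' is exactly where Shiu's analysis lives, and the specific shape of \eqref{eq:m(r)bd} depends on a delicate optimization there that your outline does not reproduce.
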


We now make an important observation based on Lemma~\ref{lem:shiustrings},
which may be of independent interest.

\begin{proposition}
\label{prop:statue}
Fix $\cP\in\sB(\delta,\eps)$.
For all $c,d\in\NN$ with $\gcd(c,d)=1$, the set $\cP$
contains infinitely many primes in the arithmetic
progression $c\bmod d$.
\end{proposition}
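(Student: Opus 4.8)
The plan is to derive the proposition directly from Lemma~\ref{lem:shiustrings} together with the defining estimate \eqref{eq:greenearth_asymp} for the class $\sB(\delta,\eps)$. The key point is that Shiu's result produces not merely one prime but a \emph{long string} of consecutive primes all lying in the progression $c\bmod d$, and a set $\cP$ whose counting function is asymptotic to $\delta\,\pi(x)$ cannot afford to miss every member of such a string once the strings grow long enough.

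First I would fix $c,d\in\NN$ with $\gcd(c,d)=1$ and, for each $r$ furnished by Lemma~\ref{lem:shiustrings}, consider the block of consecutive primes $p_{r+1},\dots,p_{r+m(r)}$, all congruent to $c\bmod d$. Suppose toward a contradiction that $\cP$ contains only finitely many primes in the progression $c\bmod d$; then for all sufficiently large $r$ the entire block is disjoint from $\cP$. I would then compare $\pi_\cP(x)$ at the two endpoints $x=p_{r+m(r)}$ and $x=p_{r}$: on one hand $\pi_\cP(p_{r+m(r)})-\pi_\cP(p_{r})=0$ for large $r$, while on the other hand \eqref{eq:greenearth_asymp} gives
\[
\pi_\cP(p_{r+m(r)})-\pi_\cP(p_{r})
=\delta\bigl(\pi(p_{r+m(r)})-\pi(p_{r})\bigr)+O\bigl((\log\log p_{r+m(r)})^{\eps(p_{r+m(r)})}\bigr)
=\delta\,m(r)+O\bigl((\log\log p_{r+m(r)})^{\eps(\cdot)}\bigr).
\]
Since $\varlimsup_{x\to\infty}\eps(x)\le 0$, the error term is $O\bigl((\log\log p_{r+m(r)})^{o(1)}\bigr)$, which grows slower than any fixed positive power of $\log\log p_{r+m(r)}$; meanwhile the main term $\delta\,m(r)$ is bounded below, via \eqref{eq:m(r)bd} and the crude bound $r\le p_r$, by a positive power of $\log\log r$ times lower-order logarithmic factors. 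Hence $\delta\,m(r)$ dominates the error for all large $r$, forcing the difference to be strictly positive — contradicting that it equals $0$. Therefore $\cP$ meets the progression $c\bmod d$ infinitely often.

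The main obstacle is purely a matter of bookkeeping with iterated logarithms: one must check that the lower bound \eqref{eq:m(r)bd} for $m(r)$, after substituting $p_r$ in place of $r$ (legitimate since $r<p_r$ and all the functions involved are eventually increasing), still beats the error term $(\log\log p_{r+m(r)})^{\eps(p_{r+m(r)})}$. Because $m(r)$ is itself bounded — indeed $p_{r+m(r)}\le p_r^{1+o(1)}$ comfortably, so $\log\log p_{r+m(r)}=(1+o(1))\log\log p_r$ — the two quantities $\log\log r$ and $\log\log p_{r+m(r)}$ are comparable up to a constant factor, and the genuine power $1/\phi(d)>0$ coming from Shiu's theorem defeats the $o(1)$ exponent coming from $\eps$. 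No deep input beyond Lemma~\ref{lem:shiustrings} is needed; the argument is a one-step pigeonhole against the density hypothesis.
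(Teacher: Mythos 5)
Your proposal is correct and follows essentially the same route as the paper: both invoke Shiu's strings (Lemma~\ref{lem:shiustrings}), compare $\pi_\cP$ at the two endpoints of a string, and note that the main term $\delta\,m(r)$ eventually dominates the $(\log\log\cdot)^{\eps(\cdot)}$ error because $\varlimsup\eps\le 0$ while $m(r)$ grows like a genuine positive power of $\log\log r$. The only cosmetic difference is that you phrase it as a contradiction (``the block is disjoint from $\cP$'') whereas the paper argues directly that $\pi_\cP(p_{r+m(r)})>\pi_\cP(p_r)$ for large $r\in\cS$, hence some $p_{r+j}\in\cP$ lies in the progression.
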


\begin{proof}
According to Lemma~\ref{lem:shiustrings},
there is an infinite set
$\cS\subseteq\NN$ with the property that
\begin{equation}
\label{eq:hungry1}
p_{r+1}\equiv p_{r+2}\equiv\cdots\equiv p_{r+m(r)}\equiv c\bmod d
\qquad(r\in\cS),
\end{equation}
where $m(r)$ satisfies \eqref{eq:m(r)bd}.
Taking into account \eqref{eq:greenearth_asymp},
we derive the following estimate for all $r\in\cS$:
\begin{align*}
\pi_\cP(p_{r+m(r)})-\pi_\cP(p_r)
&=\delta\bigl(\pi(p_{r+m(r)})-\pi(p_r)\bigr)
+O\bigl((\log\log r)^{\eps_r}\bigr)\\
&=\delta\,m(r)+O\bigl((\log\log r)^{\eps_r}\bigr)
\end{align*}
where $\eps_r\defeq\eps(p_{r+m(r)})$ for each $r$, and
the constant implied by the $O$-symbol depends only
on $\cP$. In view of \eqref{eq:m(r)bd} and the fact that
$\varlimsup\limits_{r\to\infty} \eps_r\le 0$, we have
\begin{equation}
\label{eq:hungry2}
\pi_\cP(p_{r+m(r)})>\pi_\cP(p_r)\qquad(r\in\cS,~r\ge r_0).
\end{equation}
For every sufficiently large $r\in\cS$,
by \eqref{eq:hungry2} it follows that
$p_{r+j}\in\cP$ for some $j$ in the range $1\le j\le m(r)$,
and by \eqref{eq:hungry1} we have $p_{r+j}\equiv c\bmod d$.
Since $\cS$ is infinite, the lemma follows.
\end{proof}

Using the Hardy-Littlewood circle method, Vinogradov~\cite{Vinogradov}
established his famous theorem that every sufficiently large odd integer
is the sum of three prime numbers. Effective versions of Vinogradov's theorem
have been given by several authors (see~\cite{Helfgott,Ramare,Tao} and references therein),
but for the purposes of the present paper we require only the following
extension of Vinogradov's theorem, which is due to Haselgrove~\cite[Theorem~A]{Haselgrove}.

\begin{lemma}
\label{lem:Hasel}
For any fixed $\theta\in(\tfrac{63}{64},1)$ there is a positive number $n_0(\theta)$
such that every odd integer $n\ge n_0(\theta)$ can be expressed as the sum
of three primes
$$
n=p_1+p_2+p_3
$$
with $|p_j-\tfrac13 n|<n^\theta$ for each $j=1,2,3$.
\end{lemma}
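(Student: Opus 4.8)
Since the representations of $n$ as a sum of three primes furnished by Vinogradov's theorem need not be localized near $\tfrac13 n$, one must rerun the Hardy-Littlewood circle method with the primes constrained to lie in the short interval
$$
J\defeq\bigl(\tfrac13 n-n^{\theta},\,\tfrac13 n+n^{\theta}\bigr),
$$
so that any representation it produces automatically has $|p_j-\tfrac13 n|<n^{\theta}$. Writing $e(t)\defeq e^{2\pi it}$, $H\defeq n^{\theta}$, and
$$
f(\alpha)\defeq\sum_{p\in J\cap\PP}(\log p)\,e(p\alpha),
$$
one has, for every odd $n$, the identity
$$
R(n)\defeq\sum_{\substack{p_1+p_2+p_3=n\\ p_1,p_2,p_3\in J\cap\PP}}(\log p_1)(\log p_2)(\log p_3)=\int_0^1 f(\alpha)^3\,e(-n\alpha)\,d\alpha,
$$
so it suffices to show that $R(n)>0$ for all sufficiently large odd $n$. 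First I would fix a large constant $B$, set $P\defeq(\log n)^{B}$, let $\mathfrak{M}$ be the union of the intervals of radius $P/(qH)$ about the reduced fractions $a/q$ with $1\le q\le P$, let $\mathfrak{m}$ denote the complementary minor arcs, and estimate $\int_{\mathfrak{M}}$ and $\int_{\mathfrak{m}}$ separately.

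On $\mathfrak{M}$, combining the Siegel-Walfisz theorem with a prime number theorem for the short interval $J$ in arithmetic progressions (available with the needed uniformity because $\theta$ is close to $1$; in fact $\theta>\tfrac{7}{12}$ already suffices at this step), one obtains, for any fixed $A>0$,
$$
f(\alpha)=\frac{\mu(q)}{\phi(q)}\,v(\alpha-a/q)+O\bigl(H(\log n)^{-A}\bigr),\qquad v(\beta)\defeq\sum_{m\in J\cap\ZZ}e(m\beta),
$$
uniformly for $\alpha\in\mathfrak{M}$. Cubing this, integrating over the major arcs, and summing the geometric sums (extending each $\beta$-integral to $[-\tfrac12,\tfrac12]$ at a cost of $o(H^2)$) yields
$$
\int_{\mathfrak{M}}f(\alpha)^3e(-n\alpha)\,d\alpha=\bigl(\mathfrak{S}(n)+o(1)\bigr)\,\mathfrak{J}(n),
$$
where $\mathfrak{S}(n)=\sum_{q\ge1}\mu(q)^3\phi(q)^{-3}c_q(n)$ is the ternary singular series ($c_q$ being the Ramanujan sum) and $\mathfrak{J}(n)=\int_{-1/2}^{1/2}v(\beta)^3e(-n\beta)\,d\beta$. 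By orthogonality, $\mathfrak{J}(n)$ is the number of triples $(m_1,m_2,m_3)\in(\ZZ\cap J)^3$ with $m_1+m_2+m_3=n$, which is $\asymp H^2$; and since $n$ is odd, $\mathfrak{S}(n)\gg1$. Hence the major arcs contribute $\gg H^2=n^{2\theta}$.

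The crux of the argument, and the step I expect to be the main obstacle, is the minor-arc estimate. By Parseval's identity and the prime number theorem, $\int_0^1|f(\alpha)|^2\,d\alpha=\sum_{p\in J}(\log p)^2\ll H\log n$, whence
$$
\Bigl|\int_{\mathfrak{m}}f(\alpha)^3e(-n\alpha)\,d\alpha\Bigr|\le\Bigl(\sup_{\alpha\in\mathfrak{m}}|f(\alpha)|\Bigr)\int_0^1|f(\alpha)|^2\,d\alpha\ll\Bigl(\sup_{\alpha\in\mathfrak{m}}|f(\alpha)|\Bigr)H\log n,
$$
so it remains to prove the Vinogradov-type bound $\sup_{\alpha\in\mathfrak{m}}|f(\alpha)|\ll H(\log n)^{-A}$ for a suitable $A>1$. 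I would establish this by applying Vaughan's identity to $\sum_{m\in J}\Lambda(m)e(m\alpha)$ (which agrees with $f(\alpha)$ up to $O(\sqrt{H})$) to split it into ``Type~I'' sums $\sum_{d\le U}a_d\sum_{k:\,dk\in J}e(dk\alpha)$ and ``Type~II'' sums $\sum_{U<m\le V}\sum_{k:\,mk\in J}b_mc_k\,e(mk\alpha)$. The Type~I sums reduce to $\sum_{d\le U}\min(H/d,\|d\alpha\|^{-1})$, which is controlled by the Dirichlet approximation available on $\mathfrak{m}$. The Type~II sums are handled by Cauchy-Schwarz in the outer variable followed by expansion of the square; the decisive point is that, because $mk\in J$, the variable $k$ runs over an interval of length only $\asymp H/m$, and the \emph{shortness} of this range degrades the cancellation obtainable from the resulting geometric sums in $m$. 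Balancing the Type~I and Type~II bounds against the choice of the cut-off parameters $U$ and $V$ is precisely where the admissible range narrows to $\theta\in(\tfrac{63}{64},1)$; Haselgrove's original treatment, which predates Vaughan's identity, reaches the same range directly by Vinogradov's method.

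Putting the two estimates together gives $R(n)=\mathfrak{S}(n)\mathfrak{J}(n)(1+o(1))\gg n^{2\theta}>0$ for all sufficiently large odd $n$, so there is a representation $n=p_1+p_2+p_3$ with each $p_j\in J$, that is, with $|p_j-\tfrac13 n|<n^{\theta}$; this is the assertion of the lemma. (Replacing the sharp cut-off in the definition of $J$ by a smooth majorant and minorant would streamline the major-arc bookkeeping, but is not essential.)
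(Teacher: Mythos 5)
The paper does not prove this lemma: it is cited directly from Haselgrove (1951), Theorem~A. Your proposal is therefore an attempt to reconstruct Haselgrove's argument from scratch rather than something that can be compared against a proof in the text.

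As a reconstruction, the architecture is sound: restrict the exponential sum over primes to the short interval $J$ centred on $n/3$, evaluate the major arcs by a short-interval prime number theorem in arithmetic progressions, and bound the minor arcs by an exponential-sum estimate for primes in short intervals. But the decisive step is left as an assertion rather than a proof. You correctly flag the minor-arc bound $\sup_{\alpha}|f(\alpha)|\ll H(\log n)^{-A}$ as ``the main obstacle,'' sketch a Vaughan-identity Type~I/Type~II splitting, and then write that ``balancing the Type~I and Type~II bounds\ldots is precisely where the admissible range narrows to $\theta\in(\tfrac{63}{64},1)$.'' No such balance is carried out, no cut-offs $U,V$ are chosen, and it is not at all clear that a generic Vaughan-identity treatment of a short-interval bilinear sum produces a threshold as small as $\tfrac{63}{64}$; that exponent is a specific output of Haselgrove's Vinogradov-style estimate together with a major-arc analysis that goes through zero-density and zero-free-region information, and reproducing it requires the explicit inequalities your sketch omits. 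The major-arc step also needs care: the parenthetical claim that $\theta>\tfrac{7}{12}$ ``already suffices'' conflates Huxley's unconditional short-interval prime number theorem (which has no uniformity in the modulus) with the hybrid short-interval, arithmetic-progression estimate the major-arc expansion actually requires. So the framework is right, but the hard analysis that forces the specific value $\tfrac{63}{64}$ is missing; as it stands this is an outline, not a proof.
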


The following statement is a simple consequence of Haselgrove's result.

\begin{lemma}
\label{lem:vinny}
For every integer $s\ge 6$, there is an integer $N_0(s)$
with the property that every integer $N\ge N_0(s)$ can be
expressed as a sum of primes
$$
N=\widetilde p_1+\cdots+\widetilde p_s
$$
with $\widetilde p_j=2$ or $\widetilde p_j\ge\frac{1}{12}N$
for $j=1,\ldots,s$.
\end{lemma}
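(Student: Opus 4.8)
The plan is to deduce the lemma from Haselgrove's theorem (Lemma~\ref{lem:Hasel}) by expressing $N$ as a short sum of primes, each lying close to one third of some auxiliary number of size comparable to $N$, and then padding the representation with copies of the prime $2$. Since Lemma~\ref{lem:Hasel} applies only to \emph{odd} arguments, whereas adjoining copies of $2$ never alters parity, I would split into the cases $N$ odd and $N$ even, invoking Lemma~\ref{lem:Hasel} once in the first case and twice in the second. Fix once and for all a value $\theta\in(\tfrac{63}{64},1)$ to be used throughout.

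First suppose $N$ is odd. Put $n\defeq N-2(s-3)$; then $n$ is odd and $n\ge N-2s$, so choosing $N_0(s)\ge n_0(\theta)+2s$ forces $n\ge n_0(\theta)$, and Lemma~\ref{lem:Hasel} supplies primes $p_1,p_2,p_3$ with $n=p_1+p_2+p_3$ and $p_j>\tfrac13 n-n^\theta$ for $j=1,2,3$. Because $\theta<1$ we have $n^\theta<\tfrac1{12}n$ once $n$ is large, hence $p_j>\tfrac14 n>\tfrac1{12}N$ for $N$ large; after enlarging $N_0(s)$ accordingly, the representation $N=p_1+p_2+p_3+2+\cdots+2$ with $s-3$ trailing $2$'s has the required shape. (This step uses only $s\ge 3$.)

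Now suppose $N$ is even. A single application of Lemma~\ref{lem:Hasel} is unavailable for parity reasons, so instead I would set $M\defeq N-2(s-6)$, which is even and $\ge N-2s$, split it as $M=n_1+n_2$ with $n_1\defeq 2\lfloor M/4\rfloor+1$ and $n_2\defeq M-n_1$ — so that $n_1$, and therefore also $n_2$, is odd, and both lie in $(\tfrac12 M-1,\tfrac12 M+1]$ — and then apply Lemma~\ref{lem:Hasel} separately to $n_1$ and to $n_2$. This yields six primes, each of which, by the estimate already used together with $\theta<1$ and the fact that $n_1$ and $n_2$ are of size comparable to $N$, exceeds $\tfrac14\min(n_1,n_2)>\tfrac1{12}N$ once $N_0(s)$ is chosen large enough in terms of $s$ and $\theta$; adjoining $s-6\ge 0$ copies of $2$ completes the representation. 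It is exactly this step that forces the hypothesis $s\ge 6$.

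I do not anticipate any essential difficulty, since all of the arithmetic content is furnished by Lemma~\ref{lem:Hasel}; what remains is the bookkeeping needed to check that one choice of $N_0(s)$ simultaneously guarantees that every argument fed into Lemma~\ref{lem:Hasel} exceeds $n_0(\theta)$ and that each resulting prime — being within $o(N)$ of a number that is $\gg N$ — comfortably exceeds $\tfrac1{12}N$. As $\theta$ is a fixed constant strictly less than $1$, both points are routine. If any part deserves to be called the crux, it is merely the observation that the even case forces two applications of Lemma~\ref{lem:Hasel}, which is what pins down the constant $6$ in the statement.
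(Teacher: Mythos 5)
Your proposal is correct and follows essentially the same route as the paper: split by the parity of $N$, apply Haselgrove's theorem once when $N$ is odd and twice when $N$ is even, and pad with copies of $2$, with $s\ge 6$ needed precisely because the even case consumes six primes. Your handling of the even case is somewhat more explicit than the paper's (you construct the odd summands $n_1,n_2$ directly rather than appealing to ``$n_0$ sufficiently large''), but the underlying idea and the bookkeeping are the same.
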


\begin{proof}
Set $\theta\defeq \frac{99}{100}$.  Since $\theta\in(\tfrac{63}{64},1)$,
Lemma~\ref{lem:Hasel} shows that there is a positive number $n_0=n_0(\theta)$
such that every odd integer $n\ge n_0$ can be expressed as the sum
of three primes, $n=p_1+p_2+p_3$, with $p_j\ge \tfrac14 n$
for each $j=1,2,3$.

Put $N_1(s)\defeq n_0+2s-6$, and let $N$ be an odd integer
exceeding $N_1(s)$. Since $n\defeq N-2s+6$ is an odd integer exceeding $n_0$,
we can write $n=p_1+p_2+p_3$ as above.  Consequently,
$$
N=p_1+p_2+p_3+
\mathop{\underbracket{\hskip3pt 2+\cdots+2\hskip3pt}}\limits_{\text{$s-3$ copies}}
$$
where
\begin{equation}
\label{eq:pjone}
p_j\ge\tfrac14 n=\tfrac14(N-2s+6)\qquad(j=1,2,3).
\end{equation}
Replacing $N_1(s)$ by a larger number, if necessary,
the bound $N>N_1(s)$ and \eqref{eq:pjone} together imply
that $p_j\ge\frac{1}{12}N$ for $j=1,2,3$.

Next, put $N_2(s)\defeq 3n_0+6s-36$, and let $N$ be an even integer
exceeding $N_2(s)$.  If $n_0$ is sufficiently large (which we can assume)
then $N-6s+36=n+n'$ for some odd integers $n$ and $n'$
that are both larger than $\max\{n_0,\tfrac13N\}$.  Therefore,
writing $n=p_1+p_2+p_3$ and $n'=p'_1+p'_2+p'_3$ as above, we have
$$
N=p_1+p_2+p_3+p'_1+p'_2+p'_3+
\mathop{\underbracket{\hskip3pt 2+\cdots+2\hskip3pt}}\limits_{\text{$s-6$ copies}}
$$
where
$$
p_j\ge\tfrac14 n\ge\tfrac{1}{12}N\mand
p'_j\ge\tfrac14 n'\ge\tfrac{1}{12}N
\qquad(j=1,2,3).
$$

Taking $N_0(s)\defeq\max\{N_1(s),N_2(s)\}$ we finish the proof.
\end{proof}

\begin{proof}[Proof of Theorem~\ref{thm:main_asymp}]
Fix a set $\cP\in\sB(\delta,\eps)$ with $2\in\cP$.
Since $\pi_\cP(x)$ satisfies \eqref{eq:greenearth_asymp} the first
condition of Lemma~\ref{lem:BanGulVau} is met, and it remains
only to verify the second condition of Lemma~\ref{lem:BanGulVau}.

 Fix an arbitrary integer $s\ge 6$,
and let $a,b\in\NN$ be given.  Replacing
$a$ with a sufficiently large number in the  progression
$a\bmod b$, we can assume that $a\ge 24b$.  We can further
assume that $a$ exceeds the number $N_0(s)$ described
in the statement of Lemma~\ref{lem:vinny}.  Therefore, $a$ can be expressed as a
sum of primes $a=\widetilde p_1+\cdots+\widetilde p_s$,
where for every $j=1,\ldots,s$ we have either
$\widetilde p_j=2$ or else
$$
\widetilde p_j\ge\tfrac{1}{12}a\ge 2b>b.
$$
In the latter case, it is clear that
$\gcd(\widetilde p_j,b)=1$, hence by
Proposition~\ref{prop:statue} there is a prime $p_j\in\cP$ such that
\begin{equation}
\label{eq:cong}
p_j\equiv\widetilde p_j\bmod b.
\end{equation}
Since $2\in\cP$, we can put $p_j\defeq 2$ whenever
$\widetilde p_j=2$, obtaining \eqref{eq:cong} in this case as well.
Summing the congruences \eqref{eq:cong} over $j=1,\ldots,s$
gives
$$
a=\widetilde p_1+\cdots+\widetilde p_s
\equiv p_1+\cdots+ p_s\bmod b.
$$
This shows that the second condition of Lemma~\ref{lem:BanGulVau} is met,
and the proof of Theorem~\ref{thm:main_asymp} is complete.
\end{proof}

\section*{Acknowledgements}

The author wishes to thank Tristan Freiberg, Andrew Granville, Victor Guo and Stephen Montgomery-Smith
for their helpful comments.

\end{document}